\documentclass[12pt,a4paper]{amsart}
\usepackage{tikz}
\usepackage{pdflscape}
\usepackage{amsmath}
\usepackage{amsfonts}
\usepackage{amssymb}
\usepackage{mathtools}
\usepackage[all]{xy}
\usepackage{color}
\usetikzlibrary{arrows}
\usepackage{float}
\usepackage[shortlabels]{enumitem}
\usepackage{ifpdf}
\ifpdf
  \usepackage[colorlinks,
            linkcolor=magenta,       
            anchorcolor=magenta,     
            citecolor=magenta,       
            final,
            hyperindex]{hyperref}
\else
  \usepackage[colorlinks,
            linkcolor=magenta,       
            anchorcolor=magenta,     
            citecolor=magenta,       
            final,
            hyperindex,driverfallback=hypertex]{hyperref}
\fi
\newtheorem{theorem}{Theorem}[section]
\newtheorem{lemma}[theorem]{Lemma}

\newtheorem{problem}[theorem]{Problem}

\newtheorem{prop}[theorem]{Proposition}
\theoremstyle{definition}
\newtheorem{defn}[theorem]{Definition}
\newtheorem{remark}[theorem]{Remark}

\makeatletter \@addtoreset{equation}{section} \makeatother
\allowdisplaybreaks

\newcommand{\delete}[1]{}

\newcommand{\phii}[2]{\phi_{#1,#2}}
\begin{document}
\title[Dual weak left $\star$-braces]{Deformed solutions of the Yang-Baxter equation associated to dual weak left $\star$-braces}

\author{Shoufeng Wang}
\address{School of Mathematics, Yunnan Normal University, Kunming, Yunnan 650500, China}
\email{wsf1004@163.com}


\begin{abstract}As generalizations of dual weak left braces and skew left braces, in this paper, dual weak left $\star$-braces and square skew left braces are introduced, respectively. We firstly show that a dual weak left $\star$-brace is exactly a strong semilattice of a family of square skew left braces. Then we introduce  distributors for dual weak left $\star$-braces and prove that the map deformed by each  distributor  is always a solution of the Yang-Baxter equation.  Our work
may be regarded as extending and enriching some related results on skew left braces and weak left braces in literature.
\end{abstract}
\makeatletter
\@namedef{subjclassname@2020}{\textup{2020} Mathematics Subject Classification}
\makeatother
\subjclass[2020]{20M18, 16T25, 16Y99}
\keywords{Yang-Baxter equation; Set-theoretic solution; Regular $\star$-semigroup;  Dual weak $\ast$-brace; Skew left brace}

\maketitle



\vspace{-0.8cm}

\section{Introduction}
The quantum Yang-Baxter equation first appeared in theoretical physics in Yang \cite{Yang} and  in statistical mechanics  in Baxter \cite{Baxter} independently. Now,  the Yang-Baxter equation has many applications in the areas of mathematics and mathematical physics such as knot theory, quantum computation, quantum group theory and so on. Let $V$ be a vector space and $R: V\otimes V \rightarrow V\otimes V$ be a linear transformation. Then $R$ is called  {\em a solution on a vector space} of the Yang-Baxter equation if $R_{12}R_{23}R_{12} =R_{23}R_{12}R_{23} \mbox{ in End}(V\otimes V\otimes V),$
where $R_{ij}$ acts as $R$ on the $i$-th and $j$-th components, and as the identity on the remaining component.

Finding all solutions of the Yang-Baxter equation is presently impossible, so in 1992, Drinfeld \cite{Drinfeld} posed the question of finding all the so called set-theoretic  solutions of the Yang-Baxter equation as they form solutions on vector spaces by linearly extending them. Let $S$ be a non-empty set and $r: S\times S\longrightarrow S\times S$ be a map. Then for any $a\in S$, $r$ induces the following maps:
$$\lambda_{a}: S\rightarrow S, \,b\mapsto \mbox{the first component of }r(a,b),$$$$
\rho_{b}: S\rightarrow S,\, a\mapsto \mbox{the second component of }r(a,b).$$
Thus, for all $a,b\in S$, we have $r(a,b)=(\lambda_{a}(b),\rho_{b}(a))$.
If $r$ satisfies the following equality $$(r\times {\rm id}_{S})({\rm id}_{S}\times r)(r\times {\rm id}_{S})=({\rm id}_{S}\times r)(r\times
{\rm id}_{S})({\rm id}_{S}\times r)$$ in the set of maps from $S\times S\times S$ to $S\times S\times S$, where  ${\rm id}_{S}$ is the identity map on $S$, then  $r_S$ is called a {\em set-theoretic  solution} of the Yang-Baxter equation, or briefly a {\em solution}.
It is a routine matter to prove  that $r$ is  a   solution  of the Yang-Baxter equation if and only if
\begin{equation}\label{jie1}
\lambda_{x}\lambda_{y}(z)=\lambda_{\lambda_{x}(y)}\lambda_{\rho_{y}(x)}(z),
\end{equation}
\begin{equation}\label{jie2}
\rho_{z}\rho_{y}(x)=\rho_{\rho_{z}(y)}\rho_{\lambda_{y}(z)}(x),
\end{equation}
\begin{equation}\label{jie3}
\lambda_{\rho_{\lambda_{y}(z)}(x)}\rho_{z}(y)=\rho_{\lambda_{\rho_{y}(x)}(z)}\lambda_{x}(y)
\end{equation}
for all $x,y,z\in S$. Moreover, a solution $r$ is called {\em left non-degenerate} (respectively, right non-degenerate) if $\lambda_a$ (respectively, $\rho_a$) is bijective for each $a\in S$. A {\em non-degenerate solution} is a solution which is both left and right non-degenerate.    A solution
that is neither left nor right non-degenerate is called {\em degenerate}.

Set-theoretic solutions of the Yang-Baxter equation are investigated extensively in recent years, see for example
\cite{Etingof-Schedler-Soloviev,Gateva-Ivanova-Van den Bergh,Lu-Yan-Zhu,Soloviev}. Subsequently, non-degenerate involutive solutions have been studied by many authors, see the papers \cite{Cedo,Cedo-Jespers-Okninski,Rump1,Rump2} and the  references therein. In particular,   Rump \cite{Rump1,Rump2} introduced left cycle sets and left braces to investigate non-degenerate involutive solutions, respectively.  In 2007, Rump introduced left braces in \cite{Rump2} as a generalization of Jacobson radical rings, and a few years later, Ced$\acute{\rm o}$, Jespers, and Okni$\acute{\rm n}$ski \cite{Cedo-Jespers-Okninski} reformulated
Rump's definition of left braces.  Since then, left braces have become the most used tool in the investigations of  non-degenerate involutive solutions of the Yang-Baxter equation. In order to study non-degenerate solutions that are not necessarily involutive, Guarnieri and Vendramin \cite{Guarnieri-Vendramin} introduced a generalization of left braces, namely, skew left braces. To state the notions of skew left braces and left braces, we need recall some necessary terminologies and notations.

Let $(S, \cdot)$ be a semigroup.  As usual, for all $x, y\in S$, we denote $x\cdot y$ by $xy$.  Let $e\in S$. Then $e$ is called idempotent if $e^2=e$. As usual, we denote the set of idempotent elements in $(S, \cdot)$ by $E(S, \cdot)$.
Recall  from Howie \cite{Howie} that a semigroup $(S, \cdot)$ is called an {\em inverse semigroup} if, for every $a\in S$, there exists a unique element $b$ in  $S$ such that $aba=a$ and $bab=b$. We denote such an element $b$ by $a^{-1}$ and call it the {\em inverse } of $a$ in $(S, \cdot)$. In this case, we have
\begin{equation}\label{inverse semigroup}
(a^{-1})^{-1}=a^{-1} \mbox{ and } (ab)^{-1}=b^{-1}a^{-1}\mbox{ for all } a,b\in S.
\end{equation}
  Obviously, groups are inverse semigroups, and if $(S, \cdot)$ is a group and  $a\in S$, then $a^{-1}$ above is exactly the usual inverse of $a$ in the group $(S, \cdot)$.
Let $(S, +)$ and $(S, \cdot)$  be two inverse semigroups.  Then we usually denote the inverse  of $x$ in $(S, +)$ by $-x$ and denote $x+(-y)$ by $x-y$ for all $x, y\in S$. {\bf To avoid parentheses,  throughout this paper we will assume that the multiplication has higher precedence than the addition}. For  example, we use $xy^{-1}-z+y+zw$ to denote $(x\cdot (y^{-1}))+(-z)+y+(z\cdot w)$  for all $x, y, z, w\in S$.

Let $(S, +)$ and $(S, \cdot)$ be two groups. The triple $(S, +, \cdot)$ is called a {\em skew left brace}  if the following axiom holds:
\begin{equation}\label{skew brace}x(y+z)=xy-x+xz.
\end{equation}
If this is the case, the identities in $(S, +)$ and $(S, \cdot)$ coincide.  A skew left brace $(S, +, \cdot)$ is called a {\em left brace} if $(S, +)$  is an abelian group.
In \cite[Theorem 3.1]{Guarnieri-Vendramin}, Guarnieri and Vendramin have proved that every skew left brace $(S, +, \cdot)$ can give rise to some bijective non-degenerate solution $r_S$. One can prove that $r_S$ can be rewritten as
\begin{equation}\label{associated map}
\begin{array}{cc}
r_S: S\times S \rightarrow  S\times S,\,\,\,\,  (x, y)\mapsto (x(x^{-1}+y), (x^{-1}+y)^{-1}y),\\[2mm]
\mbox{i.e. } r_S(x, y)=(x(x^{-1}+y), (x^{-1}+y)^{-1}y) \mbox{ for all } x, y\in S,
\end{array}
\end{equation}
where $x^{-1}$ is the inverse of $x$ in the group $(S, \cdot)$ for all $x\in S$. Moreover, $r_S$ is involutive if and only if  $(S, +, \cdot)$ is a left brace. The above $r_S$ is called {\em the map associated to $(S, +, \cdot)$} in literature.
More details on left braces and skew left braces, the readers can  consult
the survey articles \cite{Cedo,Vendramin}. More recently,  Doikou and Rybo{\l}owicz \cite{DoRy22x}  presented a way to assign a new ``deformed" solution to particular elements of skew left braces. In the case of the identity element, they get the usual solution  associated to a skew left brace.

On the other hand, in 2022, Catino, Mazzotta, Miccoli and Stefanelli \cite{Catino-Mazzotta-Miccoli-Stefanelli} have introduced {\em weak left braces} by using so-called {\em Clifford semigroups} to study not necessarily bijective solutions. In fact, a similar approach of weakening the structure was already considered for the quantum Yang-Baxter equations by introducing weak Hopf algebras in Li \cite{Li}). A triple $(S, +, \cdot)$ is called a {\em weak left brace} if $(S, +)$ and $(S, \cdot)$ are  inverse  semigroups and the following axioms hold:
\begin{equation}\label{weak semibrace}
x(y+z)=xy-x+xz,\,\,\,\,\,\, xx^{-1}=-x+x.
\end{equation}
In this case, $r_S$ in (\ref{associated map}) is well defined, but $x^{-1}$ is the inverse of $x$ in the inverse semigroup $(S, \cdot)$ for all $x\in S$. According to \cite[Theorem 11]{Catino-Mazzotta-Miccoli-Stefanelli},  the map  given in (\ref{associated map})  associated to a weak left  brace is always  a solution. More recent results on  weak left braces can be found in \cite{Catino-Colazzo-Stefanelli3,Catino-Mazzotta-Stefanelli2,Catino-Mazzotta-Stefanelli3,Gong-Wang, Mazzotta-Rybolowicz-Stefanelli}.
In 2023,  Catino, Mazzotta and Stefanelli \cite{Catino-Mazzotta-Stefanelli2} introduced a class of weak left braces, namely, {\em dual weak left braces}, and established the relationship between dual weak left braces and Rota-Baxter operators on Clifford semigroups. In 2024, among other things, Catino, Mazzotta and Stefanelli \cite{Catino-Mazzotta-Stefanelli3}
determined the algebraic structures of dual weak left braces. More specifically, they proved that a dual weak left brace is a strong semilattice of a family of skew left braces.  In 2025, Mazzotta, Rybo{\l}owicz and Stefanelli \cite{Mazzotta-Rybolowicz-Stefanelli} generalized some results in  \cite{DoRy22x} on deformed solutions of Yang-Baxter equation on skew left braces to deformed solutions  on weak left braces. In 2026, Gong and Wang \cite{Gong-Wang} introduced post Clifford semigroups and connected dual weak left brace with post Clifford semigroups and other related algebraic structures.

 As a generalization of inverse semigroups, regular $\star$-semigroups  were introduced by   Nordahl  and Scheiblich in \cite{Nordahl-Scheiblich} in 1978.
Let $(S, \cdot)$ be a semigroup  and $\ast: S\rightarrow S,\, x\mapsto x^\ast$ be a map.  Recall from \cite{Nordahl-Scheiblich} that the triple  $(S, \cdot,\, \ast)$ is called {\em a regular ${\star}$-semigroup} if the following axioms are satisfied:
\begin{equation}\label{shishi1.2} x x^\ast x=x,\,\, x^{\ast\ast}=x,\,\,
(xy)^\ast=y^\ast x^\ast,
\end{equation} where $x^{\ast\ast}=(x^\ast)^\ast$. We denote $$P(S, \cdot)=\{e\in E(S, \cdot)\mid e^\ast=e\}$$ and called it {\em the set of projections of $(S,\cdot, \ast)$}. Observe that the axiom $x^\ast x x^\ast= x^\ast$ also holds in this case. Obviously, if $(S, \cdot)$ is an inverse semigroup, then by the definition of inverse semigroups and (\ref{inverse semigroup}), $(S, \cdot, \ast)$ forms a regular $\star$-semigroup by setting $x^\ast=x^{-1}$ for all $x\in S$.

In 2025, as generalizations of weak left braces,  Liu and Wang \cite{Liu-Wang} have introduced weak left $\star$-braces by using regular $\star$-semigroups and determined the algebraic structures of weak left $\star$-braces. Moreover, they have also proved that the associated maps to weak left $\star$-braces are always  solutions of the Yang-Baxter equation. This work extended the results on weak left braces in \cite{Catino-Mazzotta-Miccoli-Stefanelli}.

From the above statements, the following question is natural:
\begin{problem}
How to establish a theory parallel to the theory in \cite{Catino-Mazzotta-Stefanelli3,Mazzotta-Rybolowicz-Stefanelli} in the class of weak left $\star$-braces?
\end{problem}
In this paper, we shall consider the above problem. Section 2 contains some preliminaries on skew left braces, regular $\star$-semigroups and weak left $\star$-braces, respectively. In Section 2, we introduce dual weak left $\star$-braces and obtain some structural properties of them, which generalizes the corresponding results on  weak left braces given in  Catino, Mazzotta and Stefanelli \cite{Catino-Mazzotta-Stefanelli3}.  In Section 4, we introduce {\em distributors} of dual weak left $\star$-braces and prove that the maps deformed by each distributor is always a solution of the Yang-Baxter equation,
which generalizes the corresponding results on  weak left braces given in   Mazzotta, Rybo{\l}owicz and Stefanelli \cite{Mazzotta-Rybolowicz-Stefanelli}.
\section{Preliminaries}
In this section,  we recall and give some notions and basic facts on regular $\star$-semigroups, skew left braces and weak left $\star$-braces.
Recall from Howie \cite{Howie} that the {\em Green's relations} ${\mathcal L}$ and ${\mathcal R}$ on a semigroup $(S, \cdot)$ are defined as follows: For all $a,b\in S$, $$a{\mathcal L} b \mbox{ if and only if } a=b \mbox{ or  there exist } u, v\in S \mbox{ such that }a=ub,\, \, b=va,$$
$$a{\mathcal R} b \mbox{ if and only if } a=b \mbox{ or  there exist  } u, v\in S \mbox{ such that }a=bu,\, \, b=av,$$
$$a{\mathcal J} b \mbox{ if and only if } a=b \mbox{ or  there exist  } u, v, s,t\in S \mbox{ such that }a=ubv,\, \, b=sat.$$
Observe that both $\mathcal L$ and $\mathcal R$ are  equivalences on $S$. Let $${\mathcal L}\circ {\mathcal R}=\{(x,y)\in S\times S\mid \mbox{there exists } z\in S \mbox{ such that } x {\mathcal L} z {\mathcal R} y\},$$$${\mathcal R}\circ {\mathcal L}=\{(x,y)\in S\times S\mid \mbox{there exists } z\in S \mbox{ such that } x {\mathcal R} z {\mathcal L} y\}.$$
By \cite[Proposition 2.1.3]{Howie}, we have ${\mathcal L}\circ {\mathcal R}={\mathcal R}\circ {\mathcal L}$.
Denote ${\mathcal D}={\mathcal L}\circ {\mathcal R}$. Then $\mathcal D$ is the smallest equivalence on $S$ which containing $\mathcal L$ and $\mathcal R$.
The following lemma  collects some  basic properties of regular $\star$-semigroups.
\begin{lemma}[\cite{Nordahl-Scheiblich,Yamada2}] \label{1.2} Let $(S,\cdot,\, \ast)$ be a regular $\star$-semigroup, $e,f\in P(S,\cdot)$ and $a, b\in S$.
\begin{itemize}
\item[(1)] $P(S,\cdot)=\{ xx^\ast\mid x\in S\} =\{x^\ast x\mid x\in S\}$ and $e^\ast=e\in E(S, \cdot)$.
\item[(2)] $E(S,\cdot)=(P(S,\cdot))^2=\{ef\mid e,f \in P(S, \cdot)\}$.
\item[(3)] $a{\mathcal R} b$ (respectively, $a{\mathcal L} b$) if and only if  $ aa^\ast =bb^\ast $ (respectively, $a^\ast a =b^\ast b$).
\item[(4)] $aea^\ast, fef\in P(S,\cdot)$ and $a {\mathcal D} a^\ast$.
\end{itemize}
\end{lemma}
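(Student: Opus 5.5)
The four items all follow directly from the three axioms $xx^\ast x=x$, $x^{\ast\ast}=x$, $(xy)^\ast=y^\ast x^\ast$ and the derived identity $x^\ast xx^\ast=x^\ast$. I will prove them in the order (1), (4), (2), (3).

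For (1), note first that $(xx^\ast)^\ast=x^{\ast\ast}x^\ast=xx^\ast$, and that $(xx^\ast)(xx^\ast)=(xx^\ast x)x^\ast=xx^\ast$. Hence $xx^\ast\in P(S,\cdot)$. Conversely, if $e\in P(S,\cdot)$, then $e=ee=ee^\ast$. Replacing $x$ by $x^\ast$ gives the same two conclusions for the products $x^\ast x$. The equality $e^\ast=e\in E(S,\cdot)$ is just the definition of a projection.

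For (4), we have $(aea^\ast)^\ast=a^{\ast\ast}e^\ast a^\ast=aea^\ast$. For idempotence, write $e=ee$, so that
\[
(aea^\ast)(aea^\ast)=ae(a^\ast a)e a^\ast .
\]
The key observation is that $e$ and $a^\ast a$ are both projections. This reduces the claim to showing that the product of projections $e\,(a^\ast a)\,e$ behaves well. Here I expect the main subtlety: projections need not commute in a regular $\star$-semigroup. The natural route is to show first that $fef$ is a projection for any $e,f\in P(S,\cdot)$. Write $fe=x$. Then $x^\ast=ef$, so $fef=ffef=(fe)(ef)=xx^\ast$, using $e=ee$ and $f=ff$; by (1) this is a projection. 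In the same way, $aea^\ast=(ae)(ae)^\ast$, since $(ae)^\ast=ea^\ast$ and $ee=e$, so (1) gives $aea^\ast\in P(S,\cdot)$ directly. This sidesteps the idempotence computation entirely. For $a\,\mathcal D\,a^\ast$, observe that $a\,\mathcal R\,aa^\ast$, because $aa^\ast=a\cdot a^\ast$ and $a=aa^\ast\cdot a$. Similarly $aa^\ast\,\mathcal L\,a^\ast$, because $aa^\ast=a\cdot a^\ast$ and $a^\ast=a^\ast\cdot aa^\ast$. Hence $a\,\mathcal R\,aa^\ast\,\mathcal L\,a^\ast$, which gives $a\,\mathcal D\,a^\ast$.

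For (2), the inclusion $(P(S,\cdot))^2\subseteq E(S,\cdot)$ needs $efef=ef$. This holds because $fef$ is a projection by (4), and a short calculation then gives $efef=e(fef)$. More cleanly, $(ef)(ef)^\ast(ef)=ef$ together with $(ef)^\ast=fe$ yields $effeef=ef$, that is, $efef=ef$. For the reverse inclusion, let $g\in E(S,\cdot)$. Then $g=gg^\ast g=(gg^\ast)(g^\ast g)$, using $g=gg$ to obtain $gg^\ast g\cdot\ldots$. Concretely, $(gg^\ast)(g^\ast g)=g(g^\ast g^\ast)g=g(gg)^\ast g=gg^\ast g=g$. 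Both factors are projections by (1), so $g\in(P(S,\cdot))^2$.

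For (3), suppose $a\,\mathcal R\,b$ via $a=bu$ and $b=av$. Then $aa^\ast$ is a left identity for $b$, since $aa^\ast b=aa^\ast av=av=b$, and likewise $bb^\ast a=a$. It follows that
\[
bb^\ast=aa^\ast bb^\ast=(bb^\ast aa^\ast)^\ast .
\]
Moreover $bb^\ast aa^\ast=aa^\ast$, because $bb^\ast a=a$. Therefore $bb^\ast=(aa^\ast)^\ast=aa^\ast$. Conversely, if $aa^\ast=bb^\ast$, then $a=aa^\ast a=b(b^\ast a)$ and $b=a(a^\ast b)$, so $a\,\mathcal R\,b$. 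The $\mathcal L$ case follows by applying the involution $\ast$, which interchanges $\mathcal L$ and $\mathcal R$.
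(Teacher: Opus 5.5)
The paper gives no proof of this lemma; it imports it from Nordahl--Scheiblich and Yamada, so there is no argument in the paper to compare against. Your direct verification from the axioms $xx^\ast x=x$, $x^{\ast\ast}=x$, $(xy)^\ast=y^\ast x^\ast$ is correct in every item.
\begin{itemize}
\item[(1)] You establish both inclusions: $xx^\ast$ is a self-adjoint idempotent, and a projection satisfies $e=ee^\ast=e^\ast e$.
\item[(4)] The identity $aea^\ast=(ae)(ae)^\ast$ reduces the claim to (1), and $fef$ is just the case $a=f$. The chain $a\,\mathcal R\,aa^\ast\,\mathcal L\,a^\ast$ gives $a\,\mathcal D\,a^\ast$ once you use $\mathcal L\circ\mathcal R=\mathcal R\circ\mathcal L$. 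Alternatively, $a\,\mathcal L\,a^\ast a\,\mathcal R\,a^\ast$ matches the paper's definition $\mathcal D=\mathcal L\circ\mathcal R$ verbatim.
\item[(2)] The computation $(ef)(ef)^\ast(ef)=effeef=efef$ and the factorization $g=(gg^\ast)(g^\ast g)$ for $g\in E(S,\cdot)$ are both right.
\item[(3)] The derivation $bb^\ast=aa^\ast bb^\ast=(bb^\ast aa^\ast)^\ast=(aa^\ast)^\ast$ is valid, and you correctly obtain the $\mathcal L$ case by applying $\ast$, which swaps $\mathcal L$ and $\mathcal R$.
\end{itemize}
What your approach buys is a self-contained lemma in which each item takes a line or two. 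The paper's citation buys only brevity.

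Only cosmetic cleanup is needed. Drop the abandoned idempotence computation at the start of (4) and the garbled phrase about $gg^\ast g\cdot\ldots$ in (2). In the $fef$ chain, the middle term should read $feef$ rather than $ffef$; this is harmless, since every term equals $fef$.
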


Let $(S, \cdot)$ be a semigroup. Then $(S, \cdot)$ is called {\em regular} if for every $x\in S$, there exists $a\in S$ such that $axa=a$ and $xax=x$.
Obviously, inverse semigroups and regular $\star$-semigroups are regular semigroups.
 Moreover,  recall from Howie \cite{Howie} that  a regular semigroup  $(S, \cdot)$ is called
\begin{itemize}
\item{\em simple} if ${\mathcal J}=S\times S$, (\cite[Section 3.1]{Howie})
\item  {\em orthodox} if  $E(S,\cdot)$ is a  subsemigroup of $(S,\cdot)$,\,\, (\cite[Section 6.2]{Howie})
\item  {\em completely regular}  if $a\, {\mathcal H}\, a^2$ for all $a\in S$,  (\cite[Theorem 2.2.5 and Proposition 4.1.1]{Howie})
\item{\em completely simple} if $S$ is both simple and completely regular,  (\cite[Theorem 3.3.2]{Howie})
\item {\em locally inverse} if $(eSe, \cdot)$ is an inverse  semigroup  for each $e\in E(S,\cdot)$.   (\cite[Section 6.1]{Howie})
\end{itemize}
\begin{lemma}[Theorem 4.1 in \cite{Petrich1}]\label{wanquandan1}Let $(S,\cdot,\,  \ast)$ be a regular $\star$-semigroup.  Then $(S,\cdot,\,  \ast)$ is   completely simple if and only if $xx^\ast=x yy^\ast x^\ast$ for all $x\in S$.
\end{lemma}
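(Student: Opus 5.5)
Throughout, write $P=P(S,\cdot)$ for the set of projections.

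\emph{Forward direction.} Assume $(S,\cdot,\ast)$ is completely simple, and fix $x,y\in S$. Put $e=xx^\ast\in P$ and $g=yy^\ast\in P$ (Lemma \ref{1.2}(1)). By Lemma \ref{1.2}(4), $u:=xgx^\ast\in P$, and we have $u=xx^\ast\, u\, xx^\ast$, so $eu=u=ue$. Thus $u\le e$ in the natural order on idempotents. In a completely simple semigroup every idempotent is primitive, so $u=e$, which is exactly the required identity. The only thing to justify is that primitivity is available from the characterisation of completely simple semigroups in Howie (Theorem 3.3.3 there: a simple semigroup is completely simple iff it has a primitive idempotent, and then all idempotents are primitive). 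This direction is routine.

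\emph{Converse.} Assume $xx^\ast=xyy^\ast x^\ast$ for all $x,y$. I will show that $S$ is simple and that every idempotent is primitive. A regular semigroup with these two properties is completely simple; alternatively, primitivity gives complete regularity, and simplicity then finishes.

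\emph{Primitivity.} Let $e\le f$ with $e,f\in E(S,\cdot)$, so $e=ef=fe$. By Lemma \ref{1.2}(2), idempotents are products of projections, so it is convenient to reduce first to projections. For projections $p,q$ with $p\le q$, take $x=q$ and $y=p$ in the hypothesis: $qq^\ast=q\,pp^\ast\,q^\ast$, that is, $q=qpq=p$. Hence projections are primitive among projections.

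To pass to general idempotents, take $x=a$ and $y=a^\ast$: this gives $aa^\ast=aa^\ast aa^\ast a^\ast$... Rather than chase that, I expect it is cleaner to prove simplicity first and then deduce complete regularity.

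\emph{Simplicity.} For any $a,b\in S$, the hypothesis gives
$$aa^\ast=a\,(bb^\ast)\,a^\ast=(ab)(ab)^\ast .$$
By Lemma \ref{1.2}(3), $a\,\mathcal R\,ab$. Dually, substituting $x=b^\ast$ and $y=a^\ast$ gives $b^\ast b=b^\ast a^\ast a b=(ab)^\ast(ab)$, so $b\,\mathcal L\,ab$. Consequently $a\,\mathcal R\,ab\,\mathcal L\,b$, so $a\,\mathcal D\,b$ for all $a,b$. Thus $\mathcal D=S\times S$, and in particular $\mathcal J=S\times S$, so $S$ is simple.

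\emph{Complete regularity.} Apply the $\mathcal R/\mathcal L$ facts above with $b=a$. We get $a\,\mathcal R\,a^2$ and $a\,\mathcal L\,a^2$, so $a\,\mathcal H\,a^2$. Hence $S$ is completely regular, and together with simplicity, completely simple.

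This converse argument is actually the key step, and it removes the need for the primitivity digression. The main obstacle is only the correct use of $\ast$-duality to obtain the $\mathcal L$-statement from the hypothesis applied to starred elements. That uses $(ab)^\ast=b^\ast a^\ast$ and $x^{\ast\ast}=x$.
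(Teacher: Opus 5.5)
The paper does not prove this lemma; it quotes it from Petrich's Theorem 4.1, so there is no internal argument to compare against. Your proof is correct and self-contained.

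\emph{Forward direction.} Here $u=x\,yy^\ast x^\ast$ is a projection with $xx^\ast u=u=u\,xx^\ast$. Primitivity of every idempotent in a completely simple semigroup then forces $u=xx^\ast$. The paper defines completely simple as simple and completely regular, so you are implicitly using the standard equivalence with the primitive-idempotent (Rees matrix) description. Citing that standard fact is fine.

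\emph{Converse.} This is the more instructive half. Taking $x=a,\ y=b$ gives $aa^\ast=(ab)(ab)^\ast$, and taking $x=b^\ast,\ y=a^\ast$ gives $b^\ast b=(ab)^\ast(ab)$. With Lemma \ref{1.2}(3) these yield $a\,\mathcal R\,ab\,\mathcal L\,b$. Hence $\mathcal D$, and therefore $\mathcal J$, is universal. Setting $b=a$ gives $a\,\mathcal H\,a^2$. This verifies the paper's definition of completely simple literally, using nothing beyond Lemma \ref{1.2}(3).

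\emph{Write-up.} Delete the abandoned ``primitivity'' paragraph in the converse; it is never used. Its computation is also wrong: with $x=a$, $y=a^\ast$ one has $yy^\ast=a^\ast a$, so the hypothesis becomes the tautology $aa^\ast=a(a^\ast a)a^\ast$, not $aa^\ast aa^\ast a^\ast$. The closing remark that primitivity would give complete regularity can likewise be dropped, since you obtain complete regularity directly.
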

\begin{lemma}[Theorem 4.1 in \cite{Auinger}]\label{jubuni}Let $(S,\cdot,\,  \ast)$ be a regular $\star$-semigroup.  Then $(S,\cdot,\,  \ast)$ is locally inverse  if and only if $xyy^\ast x^\ast xzz^\ast x^\ast=xzz^\ast x^\ast xyy^\ast x^\ast$ for all $x, y, z\in S$.
\end{lemma}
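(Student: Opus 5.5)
The plan is to reduce ``locally inverse'' to a statement about projections and then read the identity as commutation of projections inside the local submonoids $pSp$, for $p\in P(S,\cdot)$.

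\emph{Step 1 (the identity concerns projections of $pSp$).} Put $p=xx^\ast\in P(S,\cdot)$. By Lemma \ref{1.2}(4), $xyy^\ast x^\ast$ and $xzz^\ast x^\ast$ are projections. Since $pxyy^\ast x^\ast=xyy^\ast x^\ast=xyy^\ast x^\ast p$, both lie in $pSp$.

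Conversely, let $p\in P(S,\cdot)$ and let $f$ be a projection lying in $pSp$. Then $f=pfp=pff^\ast p^\ast$, so $f$ has the form $xyy^\ast x^\ast$ with $x=p$ and $y=f$. Hence the identity in the statement, with $x$ ranging over $P(S,\cdot)$, says exactly that the projections of $pSp$ commute. For general $x$, it says that certain projections of $xx^\ast S xx^\ast$ commute.

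\emph{Step 2 (necessity).} If $S$ is locally inverse, then $pSp$ is an inverse semigroup for $p=xx^\ast$. Idempotents of an inverse semigroup commute, so the two projections from Step 1 commute.

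\emph{Step 3 (sufficiency for projections).} Let $p\in P(S,\cdot)$. The set $pSp$ is closed under $\ast$, since $(psp)^\ast=ps^\ast p$. It also contains $s^\ast$ for each $s\in pSp$, so $(pSp,\cdot,\ast)$ is itself a regular $\star$-semigroup. By Lemma \ref{1.2}(2), every idempotent of $pSp$ is a product of projections of $pSp$. These projections pairwise commute by the identity and Step 1, so products of them are again projections and still commute. Therefore $E(pSp)$ is a commutative band. A regular semigroup whose idempotents commute is inverse, so $pSp$ is inverse.

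\emph{Step 4 (arbitrary idempotents).} Let $e\in E(S,\cdot)$ and put $p=ee^\ast\in P(S,\cdot)$. Since $e=ee^\ast e$, Lemma \ref{1.2}(3) gives $e\,\mathcal R\,p$, and hence $ep=p$ and $pe=e$.

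Define $\varphi:eSe\to pSp$ by $s\mapsto sp$ and $\psi:pSp\to eSe$ by $t\mapsto te$.
\begin{itemize}
\item $\varphi$ lands in $pSp$: for $s=ese$ we have $psp=pe\,s\,ep=sp$.
\item $\psi$ lands in $eSe$: $ete=eptpe=ptpe=te$.
\item They are mutually inverse: $\psi\varphi(s)=spe=se=s$ and $\varphi\psi(t)=tep=tp=t$.
\item $\varphi$ is a homomorphism: for $s'\in eSe$ we have $ps'=pes'=es'=s'$, so $\varphi(s)\varphi(s')=sps'p=ss'p=\varphi(ss')$.
\end{itemize}
Thus $eSe\cong pSp$, which is inverse by Step 3, and so $S$ is locally inverse.

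I expect the main obstacle to be this last reduction from arbitrary idempotents to projections, that is, getting the identities $ep=p$ and $pe=e$ and checking the isomorphism. The fallback is a direct argument: a regular semigroup is locally inverse iff idempotents below a common idempotent commute. One would then push that condition through $e\,\mathcal R\,ee^\ast$ by hand.
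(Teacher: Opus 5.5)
Your proof is correct. Each step checks out:
\begin{itemize}
\item At $x=p\in P(S,\cdot)$, the identity says exactly that the projections of $pSp$ commute, since these projections are precisely the elements $pyy^\ast p^\ast$ with $y\in S$.
\item Because the projections commute, $E(pSp)=P(pSp)^2$ reduces to a commuting set of projections, so $pSp$ is inverse.
\item This transfers to an arbitrary idempotent $e$: for $p=ee^\ast$, the maps $s\mapsto sp$ and $t\mapsto te$ are mutually inverse isomorphisms between $eSe$ and $pSp$.
\end{itemize}

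The paper gives no argument to compare against, since it imports the lemma as Auinger's Theorem 4.1. Compared with that citation, yours is a self-contained elementary proof. It uses only Lemma \ref{1.2}, the classical fact that a regular semigroup with commuting idempotents is inverse, and the standard observation (your Step 4) that local submonoids at $\mathcal R$-related idempotents are isomorphic. That last fact reduces local inverseness to the local submonoids at projections. The citation buys brevity. Your argument makes the lemma independent of the external source, and it shows as a by-product that the identity only needs to be assumed for $x\in P(S,\cdot)$.

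Two small remarks. In Step 4 you do not need Lemma \ref{1.2}(3), since $ep=e\,ee^\ast=ee^\ast=p$ and $pe=ee^\ast e=e$ are immediate. The fallback you mention is also unnecessary, because you carry out in full the reduction you expected to be the obstacle.
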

\begin{lemma}[Lemma 2.7 in \cite{Liu-Wang}]\label{zicu8}Let $(S,\cdot,\,  \ast)$ be a regular $\star$-semigroup. Then
$(S,\cdot,\,  \ast)$ is orthodox and locally inverse if and only if $afgb=agfb$ for all  $a,b\in S$ and $f,g\in P(S, \cdot)$.
\end{lemma}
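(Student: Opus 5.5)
The plan is to prove the two implications separately. Throughout I use Lemma~\ref{1.2} and the standard fact that a regular semigroup is inverse if and only if its idempotents commute.

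\emph{Sufficiency.} Assume $afgb=agfb$ for all $a,b\in S$ and $f,g\in P(S,\cdot)$. By Lemma~\ref{1.2}(2), every idempotent has the form $ef$ with $e,f\in P(S,\cdot)$.
\begin{itemize}
\item[(i)] \emph{Orthodoxy.} It suffices to show $(ef)(gh)$ is idempotent for $e,f,g,h\in P(S,\cdot)$. Consider
\[
(efgh)^2=e\cdot fgh\,efg\cdot h .
\]
The hypothesis lets us swap any two adjacent projections that have some element on each side. So the inner word $f,g,h,e,f,g$ between the outer $e$ and $h$ can be permuted into $e,f,f,g,g,h$. This gives $eeffgghh=efgh$.
\item[(ii)] \emph{Local inversity.} I use Lemma~\ref{jubuni}. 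Put $p=yy^\ast$ and $q=zz^\ast$; these are projections, and so is $x^\ast x$. Then
\[
xyy^\ast x^\ast xzz^\ast x^\ast=x\,p\,(x^\ast x)\,q\,x^\ast .
\]
Adjacent swaps inside the outer $x,\ x^\ast$ turn the middle word $p,\,x^\ast x,\,q$ into $q,\,x^\ast x,\,p$. This yields $xzz^\ast x^\ast xyy^\ast x^\ast$, as Lemma~\ref{jubuni} requires.
\end{itemize}

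\emph{Necessity.} Assume $S$ is orthodox and locally inverse.
\begin{itemize}
\item[(i)] Write $afgb=a(a^\ast a)fg(bb^\ast)b$. Here $a^\ast a,\ f,\ g,\ bb^\ast$ are idempotents, and all products of idempotents stay in the band $E(S,\cdot)$ by orthodoxy.
\item[(ii)] It therefore suffices that $E(S,\cdot)$ satisfies the normal band identity $u\,v\,w\,t=u\,w\,v\,t$. Applied with $u=a^\ast a$, $v=f$, $w=g$, $t=bb^\ast$, this gives $afgb=agfb$.
\item[(iii)] To get the identity, I would invoke the classical theorem (Yamada; see Howie, Section 6.2) that a regular semigroup is orthodox and locally inverse exactly when its idempotents form a normal band.
\item[(iv)] Directly, the route is as follows. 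Local inversity makes $eE(S,\cdot)e$ a commutative band for each idempotent $e$, since it consists of idempotents of the inverse semigroup $eSe$. A band in which every $eBe$ is a semilattice is normal; this is the classical characterisation of normal bands as bands that are ``locally semilattices''.
\end{itemize}

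The main obstacle is step (iv) of necessity, passing from ``each $eEe$ commutative'' to the four-variable normal band identity. If I do not cite the classical result, I would first try an argument inside regular $\star$-semigroups.
\begin{itemize}
\item For projections $p,q$, the elements $pfp$ and $pgp$ lie in $pSp$ and are projections by Lemma~\ref{1.2}(4). Hence they commute, and one first shows $pfgp=pgfp$.
\item One then passes to two different outer projections $p,q$ by working in $eSe$ with $e=pq\,(pq)^\ast$ or a similar idempotent built from $p$ and $q$.
\end{itemize}
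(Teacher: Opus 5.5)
The paper does not prove this lemma; it is quoted from Liu--Wang, so there is no in-paper argument to compare yours with. Judged on its own, your proof is correct.

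\textbf{Sufficiency.} This direction is complete and elementary. Because $a,b$ in the hypothesis range over all of $S$, two adjacent projections in a word of projections can be transposed whenever at least one letter stands on each side. So the six inner letters of $(efgh)^2$ can be sorted, giving $eeffgghh=efgh$. Likewise the three inner letters of $x\,(yy^\ast)(x^\ast x)(zz^\ast)\,x^\ast$ can be reversed, which is exactly the criterion of Lemma~\ref{jubuni}.

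\textbf{Necessity.} This direction is complete modulo two standard facts: Yamada's theorem, and the four-variable form $uvwt=uwvt$ of the normal band identity. Given those, the reduction $afgb=a(a^\ast a\,fg\,bb^\ast)b=a(a^\ast a\,gf\,bb^\ast)b=agfb$ is right.

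\textbf{The fallback sketch.} You do not need it, and as written it is not yet an argument. Commutativity of $pfp$ and $pgp$ only gives $pfpgp=pgpfp$; getting $pfgp=pgfp$ from this silently uses the regular band identity.

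\textbf{The step you call the main obstacle.} It closes in a few lines. Let $B=E(S,\cdot)$, a band in which every $eBe$ is a semilattice.
\begin{itemize}
\item[(a)] Suppose $g,h\in eBe$ have the same image in the semilattice $B/\mathcal D$ (Clifford--McLean). The $\mathcal D$-classes of a band are rectangular, and $gh=hg$ in $eBe$. Hence $g=ghg=g^2h=gh$ and $h=hgh=gh$, so $g=h$.
\item[(b)] Applied to $g=exye$ and $h=eyxe$, this gives the three-variable normal identity at once.
\item[(c)] For the four-variable form, put $A=uvwt$ and $A'=uwvt$. By (a), $uvwtu=uwvtu$ in $uBu$ and $tuvwt=tuwvt$ in $tBt$. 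Therefore
\[
A=A^2=(uvwtu)(vwt)=(uwvtu)(vwt)=A'A,\qquad A=A^2=(uvw)(tuvwt)=(uvw)(tuwvt)=AA'.
\]
\item[(d)] Exchanging $v$ and $w$ in the first computation gives $A'=AA'$. Hence $A=AA'=A'$.
\end{itemize}
With this, your necessity argument is self-contained apart from the Clifford--McLean decomposition of bands.
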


\begin{lemma}\label{wanquandan}
Let $(S,\cdot,\,  \ast)$ be a regular $\star$-semigroup which is completely simple and orthodox. Then $afgb=ab$  and $fgf=f$for all $f,g\in P(S)$ and $a,b\in S$.
\end{lemma}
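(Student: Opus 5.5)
Since $S$ is completely simple and orthodox, we combine Lemma \ref{wanquandan1} with Lemma \ref{zicu8}. First I will check that $S$ is locally inverse, so that Lemma \ref{zicu8} applies. By Lemma \ref{wanquandan1}, $xyy^\ast x^\ast = xx^\ast$ and $xzz^\ast x^\ast = xx^\ast$ for all $x,y,z\in S$. Hence both sides of the identity in Lemma \ref{jubuni} equal $xx^\ast xx^\ast = xx^\ast$, so that identity holds and Lemma \ref{jubuni} shows $(S,\cdot,\ast)$ is locally inverse. Together with orthodoxy, Lemma \ref{zicu8} then gives $afgb=agfb$ for all $a,b\in S$ and $f,g\in P(S)$.

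Next I prove $fgf=f$ for $f,g\in P(S)$. I apply Lemma \ref{wanquandan1} with $x=f$ and $y=g$. Since $f^\ast=f$, $g^\ast=g$ and $gg=g$, this gives $ff^\ast = fgg^\ast f^\ast$, that is, $f=fgf$.

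The main step is $afgb=ab$. First I show $xfx^\ast=xx^\ast$ for every $x\in S$ and $f\in P(S)$. This follows from Lemma \ref{wanquandan1} with $y=f$, because $ff^\ast=f$.

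For general $a,b$ I write $a=aa^\ast a$ and $b=bb^\ast b$, and I aim to show $a^\ast a fg bb^\ast = a^\ast a bb^\ast$. Multiplying this on the left by $a$ and on the right by $b$ then yields $afgb=ab$.

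Set $e=a^\ast a$ and $h=bb^\ast$; both lie in $P(S)$ by Lemma \ref{1.2}(1). The reduced goal becomes $efgh=eh$, an identity purely among projections. I will handle it as follows.
\begin{itemize}
\item By Lemma \ref{zicu8} applied with $a=e$ and $b=h$, we have $efgh = egfh$.
\item Inserting idempotent factors, $efgh=(ef)(fg)(gh)$.
\item By the identity $fgf=f$ and Lemma \ref{zicu8}, products of projections can be commuted and collapsed inside any sandwich $a\,\cdot\,b$. For example, $efgh=e(fg)(gf)\cdots$.
\end{itemize}

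A cleaner route is to compute $efgh$ directly. Using commutation of adjacent projections within $e\cdots h$ (Lemma \ref{zicu8}) together with $fgf=f$:
\[
efgh=effgh=efgfh=efh.
\]
Here the middle equality swaps $fg$ to $gf$ in the middle of the sandwich $ef\cdot gh$. By the same argument with $g=e$, we get $efh=eefh=efeh=eh$. Hence $efgh=eh$, and multiplying back gives $afgb=ab$.

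The step I expect to require care is justifying the sandwich swaps with the correct outer factors, namely $ef\cdot(gh)$ with $ef,h\in S$. Lemma \ref{zicu8} allows arbitrary outer elements, so each swap is legitimate.
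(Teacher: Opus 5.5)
Your proof is correct and uses the same ingredients as the paper's: local inverseness from Lemmas \ref{wanquandan1} and \ref{jubuni}, then the sandwich swap of Lemma \ref{zicu8}. The differences are only in order and in which instance of Lemma \ref{wanquandan1} does the collapsing. You get $fgf=f$ first, directly from Lemma \ref{wanquandan1} with $x=f$, $y=g$ (this needs only complete simplicity), and use it to reduce $efgh$ to $eh$; the paper instead inserts copies of $a^\ast a$, collapses via $afa^\ast=aa^\ast$, and obtains $fgf=f$ afterwards as the case $a=b=f$. In a write-up, drop the exploratory bullet list and the unused identity $xfx^\ast=xx^\ast$, and state that the swap $effgh=efgfh$ is Lemma \ref{zicu8} with outer factors $ef$ and $h$.
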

\begin{proof}
Let $a,b\in S$ and $f,g\in P(S,\cdot)$. By hypothesis and  Lemmas \ref{wanquandan1} and \ref{jubuni}, it is easy to see that $S$  is locally inverse.
So by Lemmas \ref{1.2}, \ref{wanquandan1} and \ref{zicu8}, we have
$$afgb=a(a^\ast a fgb)=a(a^\ast a f a^\ast a g a^\ast ab)=aa^\ast (afa^\ast)(aga^\ast)ab$$$$=aa^\ast (aff^\ast a^\ast)(agg^\ast a^\ast)ab=aa^\ast aa^\ast aa^\ast ab=ab.$$
In particular, if $a=f=b$, then we have $fgf=ffgf=afgb=ab=ff=f.$
\end{proof}

\begin{lemma}[Lemma 2.8 in \cite{Liu-Wang}]\label{zicu9}Let $(S,\cdot,\,  \ast)$ be a regular $\star$-semigroup.  Then $(S,\cdot,\,  \ast)$ is completely regular, orthodox and locally inverse if and only if $x y=x y^{\ast}yy$ for all $x, y\in S$.
\end{lemma}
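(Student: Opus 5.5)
The plan is to prove the two implications separately. Throughout I use the standard fact about regular $\star$-semigroups (Lemma \ref{1.2}(3)) that $y\,{\mathcal R}\,yy^\ast$, $y\,{\mathcal L}\,y^\ast y$, and that each ${\mathcal R}$-class and each ${\mathcal L}$-class contains exactly one projection.

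\emph{Necessity.} Assume $(S,\cdot,\ast)$ is completely regular; only this hypothesis is needed here. Let $y\in S$.
\begin{itemize}
\item Since $y\,{\mathcal H}\,y^2$, the ${\mathcal H}$-class $H_y$ is a group. Let $e$ be its identity.
\item The projection $yy^\ast$ is ${\mathcal R}$-related to $y$, hence to $e$. In particular it lies in the ${\mathcal R}$-class of $e$.
\item The idempotent $e$ is the identity of a group ${\mathcal H}$-class. I want to conclude $yy^\ast=e$; the safest route is to show $yy^\ast\,{\mathcal L}\,y$ directly, so that $yy^\ast\in H_y$ is the unique idempotent there. This uses the group structure of $H_y$: write $y=y^2u=vy^2$ with $u,v\in S$ and manipulate $yy^\ast$. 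I expect this to be the most delicate bookkeeping step.
\item Symmetrically, $y^\ast y=e$, so $yy^\ast=y^\ast y$.
\end{itemize}
Hence $xy^\ast yy=x\,yy^\ast y=xy$ for all $x,y\in S$.

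\emph{Sufficiency, complete regularity.} Assume $xy=xy^\ast yy$ for all $x,y\in S$.
\begin{itemize}
\item Taking $x=yy^\ast$ gives $y=yy^\ast y=yy^\ast y^\ast y y=(yy^\ast y^\ast)\,y^2$. Hence $y\in Sy^2$, and so $y\,{\mathcal L}\,y^2$.
\item Applying the same conclusion to $y^\ast$ gives $y^\ast\,{\mathcal L}\,(y^\ast)^2=(y^2)^\ast$.
\item Applying $\ast$ (an anti-automorphism interchanging ${\mathcal L}$ and ${\mathcal R}$) turns this into $y\,{\mathcal R}\,y^2$.
\end{itemize}
Thus $y\,{\mathcal H}\,y^2$, and $S$ is completely regular.

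\emph{Sufficiency, orthodox and locally inverse.} By Lemma \ref{zicu8} it suffices to show $afgb=agfb$ for all $a,b\in S$ and $f,g\in P(S,\cdot)$. I will in fact show that projections commute.
\begin{itemize}
\item Since $S$ is now completely regular, the necessity argument gives $(fg)^\ast(fg)=(fg)(fg)^\ast$, that is, $gfg=fgf$.
\item Apply the identity with $x=f$ and $y=fg$. This gives $fg=f(gf)(fg)(fg)=(fg)^3$.
\item Also $(fg)^2=f(gfg)=f(fgf)=fgf$.
\item Hence $(fg)^3=fgf\cdot fg=fgfg=fgf$.
\end{itemize}
So $fg=fgf=gfg$. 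Interchanging $f$ and $g$ gives $gf=gfg$, and therefore $fg=gf$. Consequently $afgb=agfb$, and Lemma \ref{zicu8} yields that $S$ is orthodox and locally inverse.

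The main obstacle I anticipate is the clean justification in the necessity part that $yy^\ast=y^\ast y$ whenever $y\,{\mathcal H}\,y^2$. The rest consists of short identity manipulations.
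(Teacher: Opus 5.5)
\textbf{There is a genuine gap.} Your proof that the identity forces complete regularity is correct. Everything else rests on a false claim: that $yy^\ast=y^\ast y$ whenever $y\,\mathcal{H}\,y^2$.

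\emph{Counterexample.} Take the rectangular band $S=I\times I$, $(i,j)(k,l)=(i,l)$, with $(i,j)^\ast=(j,i)$. It is a regular $\star$-semigroup that is completely regular, orthodox and locally inverse (every $eSe$ is trivial), and it satisfies $xy=xy^\ast yy$. Yet for $y=(i,j)$ with $i\neq j$ we have $yy^\ast=(i,i)\neq(j,j)=y^\ast y$. The projections $f=(i,i)$, $g=(j,j)$ satisfy $fgf=f\neq g=gfg$ and $fg\neq gf$.

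\emph{Where it breaks.} The faulty step identifies $yy^\ast$ with the identity $e$ of $H_y$ because both are idempotents in $R_y$. Lemma \ref{1.2}(3) gives uniqueness of the \emph{projection} in an $\mathcal{R}$-class, not of the idempotent, and $e$ need not be a projection (here $e=y$). Consequently your necessity argument fails. So do the steps $gfg=fgf$ and $(fg)^2=fgf$ in the sufficiency part, and their conclusion that projections commute is false.

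\emph{Complete regularity alone is not enough.} Consider $\mathcal{M}(\mathbb{Z};\{1,2,3\},\{1,2,3\};P)$ with $(i,g,\lambda)(j,h,\mu)=(i,g+p_{\lambda j}+h,\mu)$, where $p_{12}=1$, $p_{21}=-1$ and all other $p_{\lambda j}=0$, and with $(i,g,\lambda)^\ast=(\lambda,-g,i)$. This is a completely simple regular $\star$-semigroup. The elements $x=(1,0,1)$, $y=(2,0,3)$ give $xy=(1,1,3)\neq(1,0,3)=xy^\ast yy$. So orthodoxy and local inverseness must genuinely enter the necessity proof.

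\textbf{Repair.} Both directions go through Lemma \ref{zicu8}.

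For necessity, $y\,\mathcal{L}\,y^2$ and Lemma \ref{1.2}(3) give $(y^2)^\ast y^2=y^\ast y$. Hence
\[
xy^\ast yy=x(y^\ast y)(yy^\ast)y=x(yy^\ast)(y^\ast y)y=xy\,(y^2)^\ast y^2=xyy^\ast y=xy .
\]

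For the remaining sufficiency (orthodox and locally inverse), let $f,g\in P(S,\cdot)$.
\begin{enumerate}
\item The identity with $x=f$, $y=gf$ gives $fgf=fgfgf$.
\item The identity with $y=gf$ and arbitrary $x$ gives $xgf=xfgfgf=xfgf$.
\item Its $\ast$-dual $yx=yyy^\ast x$ with $y=fg$ gives $fgx=fgfgfx=fgfx$.
\end{enumerate}
Hence $afgb=afgfb=agfb$ for all $a,b\in S$, and Lemma \ref{zicu8} gives orthodox and locally inverse.

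The paper itself only cites this lemma from Liu--Wang, so there is no in-paper proof to compare with.
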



Assume that $(S,+,\, -)$ and $(S,\cdot,\, \ast)$ are regular $\star$-semigroups, where $-: S\rightarrow S,\, x\mapsto -x$ and $\ast: S\rightarrow S,\, x\mapsto x^\ast$ are two unary operations on $S$. {\bf To avoid parentheses,  throughout this paper we always  assume that the multiplication has higher precedence than the addition and write $x+(-y)$ as $x-y$ for all $x, y\in S$.}

\begin{defn}[Definition 3.1 in \cite{Liu-Wang}] Let $(S,+,\, -)$ and $(S,\cdot,\, \ast)$ be regular $\star$-semigroups. Then $(S,+,\cdot,\, -,\, \ast)$ is called a {\em weak  left $\star$-brace} if the following axioms hold:
\begin{equation}\label{lw5}x(y+z)=xy-x+xz,\,\,\, -x+x=xx^\ast.
\end{equation}
\end{defn}
Let  $(S,+,\cdot,\, -,\, \ast)$ be a weak  left $\star$-brace. By Lemma \ref{1.2} (1) and the second identity in (\ref{lw5}),  the sets of projections $P(S,+)$ and $P(S, \cdot)$ of $(S, +, -)$ and $(S, \cdot, \ast)$ coincide. That is,
\begin{equation}\label{toushejiyizhi}
\begin{array}{cc}
P(S,+)=\{e\in E(S, +)\mid -e=e\}=\{x-x\mid x\in S\}=\{-x+x\mid x\in S\}\\[2mm]
=\{xx^\ast\mid x\in S\}=\{x^\ast x\mid x\in S\}=\{f\in E(S, \cdot)\mid f^\ast=f \}=P(S, \cdot).
\end{array}
\end{equation}
We  denote it by $P(S)$ and call  it {\em the set of projections of the  weak  left $\star$-brace $(S,+,\cdot,\, -,\, \ast)$}.

\begin{lemma}[Lemma 3.6 in \cite{Liu-Wang}]\label{lw6} Let $(S,+,\cdot,-,\ast)$ be a  weak left $\star$-brace,  $a,b\in S$ and $e\in P(S)$.
\begin{itemize}
\item[(1)]   $a^{\ast\ast}=a,\, aa^\ast a=a,\, (ab)^\ast=b^\ast a^\ast, a^\ast aa^\ast=a^\ast.$
\item[(2)] $-(-a)=a, a-a+a=a,\, -(a+b)=-b-a,\, -a+a-a=-a.$
\item[(3)] $e^\ast=-e=ee=e+e=e-e=ee^\ast=e^\ast e=-e+e=e$.
\end{itemize}
\end{lemma}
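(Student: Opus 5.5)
Items (1) and (2) follow directly from the axioms. For (1), $a^{\ast\ast}=a$, $aa^\ast a=a$ and $(ab)^\ast=b^\ast a^\ast$ are the defining axioms (\ref{shishi1.2}) of the regular $\star$-semigroup $(S,\cdot,\ast)$. The last identity comes from applying these to $a^\ast$: we get $a^\ast a^{\ast\ast}a^\ast=a^\ast$, that is, $a^\ast aa^\ast=a^\ast$. Item (2) is the same argument in additive notation for the regular $\star$-semigroup $(S,+,-)$, with $x^\ast$ written as $-x$ and the product written as a sum. The axioms give $-(-a)=a$, $a-a+a=a$ and $-(a+b)=-b-a$. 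Replacing $a$ by $-a$ in $a-a+a=a$ gives $-a+a-a=-a$.

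For (3), I will use (\ref{toushejiyizhi}), which is available since it was derived from Lemma \ref{1.2}(1) and the second identity in (\ref{lw5}). It says that $e\in P(S)$ is a projection of both $(S,\cdot,\ast)$ and $(S,+,-)$. The multiplicative side gives $e^\ast=e$ and $ee=e$. The additive side gives $-e=e$ and $e+e=e$.

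The remaining equalities are then immediate substitutions. From $-e=e$ and $e+e=e$ we get $e-e=e+(-e)=e+e=e$ and $-e+e=e+e=e$. From $e^\ast=e$ and $ee=e$ we get $ee^\ast=e^\ast e=ee=e$. Together these give the whole chain $e^\ast=-e=ee=e+e=e-e=ee^\ast=e^\ast e=-e+e=e$.

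There is no real obstacle here. The only point to watch is that (3) really depends on (\ref{toushejiyizhi}), which identifies the two projection sets. If one wanted to avoid citing it, the direct route is as follows. Given $e=xx^\ast$, the axiom $-x+x=xx^\ast$ shows that $e$ is an additive projection, and by Lemma \ref{1.2}(1) every multiplicative projection has the form $xx^\ast$.
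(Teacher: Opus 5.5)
Your proof is correct and follows the same route the paper relies on. Items (1) and (2) are the regular $\star$-semigroup axioms for $(S,\cdot,\ast)$ and $(S,+,-)$, plus the substitutions $a\mapsto a^\ast$ and $a\mapsto -a$. Item (3) follows from the identification (\ref{toushejiyizhi}) of $P(S,+)$ with $P(S,\cdot)$, which the paper derives just before quoting this lemma from Liu--Wang without a separate proof.
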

By \cite[Lemmas 3.12, 3.13,  4.1 and 4.2]{Liu-Wang} and Lemma \ref{lw6}, we have the following result.
\begin{lemma}\label{liwei} Let $(S,+,\cdot,-,\ast)$ be a weak left $\star$-brace, $x, y\in S$ and $e\in P(S)$.  Then
$$ -x+xe=-xe+x,\,ex=x+e,\, x(y-y)y=xy, e+x+y=e+x+e+y.$$
\end{lemma}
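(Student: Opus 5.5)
The statement is essentially a compilation of identities proved in \cite[Lemmas 3.12, 3.13, 4.1 and 4.2]{Liu-Wang}. The plan is to rederive each identity from the two axioms in (\ref{lw5}), using Lemma \ref{lw6} and the coincidence of projections (\ref{toushejiyizhi}). The basic tool is to specialise the left distributive law $x(y+z)=xy-x+xz$ at arguments that are projections or of the form $y-y$, $-y+y$. Then I simplify with $e+e=e=-e=ee$ for $e\in P(S)$ and with the regular $\star$-semigroup identities $a-a+a=a$ and $aa^\ast a=a$.

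\textbf{Preliminary identities.} Taking $y=z=e\in P(S)$ in the distributive law gives
$$xe=x(e+e)=xe-x+xe.$$
Taking $z=-y$ gives
$$x(y-y)=xy-x+x(-y).$$
I would also record $x(e+z)=xe-x+xz$ and its specialisation $e(e+z)=e+ez$.

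\textbf{First identity, $-x+xe=-xe+x$.} Add $-x$ on the left of $xe=xe-x+xe$ and use $-x+x=xx^\ast$ to rewrite the left-hand side. Symmetrically, start from the dual expression $(xe)(xe)^\ast=-xe+xe$ and compare the two resulting projections. I expect both sides to reduce to the same element of the form $xx^\ast$-conjugate of $e$, namely $xex^\ast$ up to the additive structure, which is a projection by Lemma \ref{1.2}(4).

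\textbf{Second identity, $ex=x+e$.} This is the step I expect to be the main obstacle, since it converts a product into a sum. I would first try writing $x=x-x+x$ and expanding
$$ex=e(x-x+x)=ex-e+e(-x)-e+ex.$$
I would combine this with $e(-x)$ computed from $e(x-x)=ex-e+e(-x)$, together with the fact that $x-x$ and $e$ are projections. The aim is to show that $e$ acts on $x$ through the additive $\mathcal R$-class, i.e.\ $ex\,\mathcal R\,x+e$ in both semigroups via Lemma \ref{1.2}(3), and then pin down equality using $a-a+a=a$. If the direct computation stalls, I would fall back on quoting \cite[Lemma 4.1]{Liu-Wang}, which is precisely this identity.

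\textbf{Remaining identities.} Once $ex=x+e$ is available, $x(y-y)y=xy$ follows. Since $y-y\in P(S)$, we have $(y-y)y=y+(y-y)=y-y+y=y$ by the second identity and Lemma \ref{lw6}(2). Finally, for $e+x+y=e+x+e+y$, rewrite $x+e=ex$. This gives $e+x+e+y=e+ex+y$, and the left side is $e+x+y=e+ex'+y$ after analogous manipulation. Alternatively, apply the distributive law to $e(x+y)=ex-e+ey$, turn each product back into a sum via $ex=x+e$, and simplify using $e-e=e+e=e$. That identity would be cited from \cite[Lemma 4.2]{Liu-Wang} as a check.
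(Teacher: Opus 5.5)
\textbf{There is a genuine gap in the third identity, and the first, second and fourth identities are not actually derived.} The paper gives no derivation of this lemma: it quotes \cite[Lemmas 3.12, 3.13, 4.1 and 4.2]{Liu-Wang} together with Lemma \ref{lw6}. So falling back on that citation for $ex=x+e$ matches the paper. The problem is with the parts you claim to derive yourself.

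For the third identity, you apply $ex=x+e$ with $e=y-y$ to get $(y-y)y=y+(y-y)$. You then rewrite $y+(y-y)$ as $y-y+y$. But $y+(y-y)=y+y-y$, so this step amounts to using $ex=e+x$ instead of $ex=x+e$. The intermediate claim $(y-y)y=y$ is in fact false. Take the square skew left brace $I\times G\times I$ of Lemma \ref{square1} with $|I|\geq 2$, which is a weak left $\star$-brace, and $y=(i,g,j)$ with $i\neq j$. Then $y-y=(j,1,j)$ and $(y-y)y=y+(y-y)=(j,g,j)\neq y$. The identity $x(y-y)y=xy$ holds there only because left multiplication by $x$ overwrites the first coordinate. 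So the left factor $x$ is essential, and no rearrangement of your argument will work. What $ex=x+e$ actually gives is
$x(y-y)y=x(y+(y-y))=xy-x+x(y-y)$.
You would still have to prove that the element $-x+x(y-y)$ (which equals the projection $x(y-y)x^\ast$, see below) is a right identity for $xy$ in $(S,+)$. That is the real content of the identity; in the dual case it is the statement $xy=xy^\ast yy$ that drives Lemma \ref{likang5}. Your proposal does not supply it.

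The other parts are incomplete rather than wrong.

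\emph{First identity.} You give no argument, and there is no $-x+x$ on the left-hand side to rewrite. Your guess $xex^\ast$ is nevertheless correct, and it follows in two lines once the second identity is available. First, $xex^\ast=x(x^\ast+e)=xx^\ast-x+xe=-x+x-x+xe=-x+xe$. Second, $xex^\ast\in P(S)$ by Lemma \ref{1.2}(4), so it is fixed by $-$. Hence $-xe+x=-(-x+xe)=xex^\ast$ as well. So the second identity should be proved first.

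\emph{Second identity.} Your $\mathcal{R}$-class idea cannot give equality, since $\mathcal{R}$-related elements need not be equal. This identity therefore rests entirely on the citation.

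\emph{Fourth identity.} Your second route only gives the mirror image. Using $e(x+y)=ex-e+ey$ with $eu=u+e$, $-e=e$ and $e+e=e$ yields $x+y+e=x+e+y+e$. To obtain $e+x+y=e+x+e+y$ you must then apply $-$, which is an anti-automorphism of $(S,+)$ fixing $e$, and rename variables.
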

\begin{lemma}[Theorem 3.16 in \cite{Liu-Wang}]\label{zicu9tiaoxingzhi}Let $(S,+,\cdot,-,\ast)$ be a weak left $\star$-brace. Then $(S, +, -)$ is completely regular, orthodox and locally inverse, and $(S, \cdot, \ast)$ is orthodox and locally inverse.
\end{lemma}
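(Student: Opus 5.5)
The plan is to reduce both assertions to the equational characterizations already recorded: Lemma \ref{zicu9} for $(S,+,-)$ and Lemma \ref{zicu8} for $(S,\cdot,\ast)$. Both are stated for an arbitrary regular $\star$-semigroup, so they apply to $(S,+,-)$ as well, read additively. I will use the identities of Lemmas \ref{lw6} and \ref{liwei} throughout, together with the coincidence $P(S,+)=P(S,\cdot)=P(S)$ from (\ref{toushejiyizhi}).

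\emph{Additive part.} By Lemma \ref{zicu9} written additively, it suffices to prove
$$x+y=x-y+y+y\qquad\text{for all } x,y\in S.$$
Put $e=-y+y=yy^\ast\in P(S)$. The identity then reads $x+y=x+e+y$. Since $y=y-y+y$, the right-hand side is the left-hand side with a projection inserted, so I need to absorb that projection. The tools I would use, in order:
\begin{itemize}
\item[(i)] $ex=x+e$ from Lemma \ref{liwei}, which turns "add a projection on the right" into "multiply by a projection on the left";
\item[(ii)] $e+x+y=e+x+e+y$ from Lemma \ref{liwei}, applied with the projection $x-x$ placed in front of $x$ via $x=x-x+x$;
\item[(iii)] $x(y-y)y=xy$ and $-x+xe=-xe+x$ from Lemma \ref{liwei}, which move projections past products.
\end{itemize}
Concretely, I would write $x+y=x-x+x+y$ and apply (ii) with the projection $x-x$ to obtain $x-x+x+(x-x)+y$. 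I would then use (i) to rewrite $(x-x)+y$ as a product of $y$ by a projection, and use the brace axiom $x(y+z)=xy-x+xz$ to trade the projection $x-x$ for $-y+y$.

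\emph{Multiplicative part.} By Lemma \ref{zicu8}, orthodoxy together with local inverseness of $(S,\cdot,\ast)$ is equivalent to
$$afgb=agfb\qquad\text{for all } a,b\in S,\ f,g\in P(S).$$
By (i), $fg=g+f$ and $gf=f+g$. So I must show $a(g+f)b=a(f+g)b$. Expanding with the brace axiom gives
$$a(g+f)=ag-a+af,\qquad a(f+g)=af-a+ag.$$
Since $-a+a=aa^\ast$ is a projection, these become sums in which projections of the form $a f a^\ast$ and $a g a^\ast$ appear; they are projections by Lemma \ref{1.2}(4). The additive part already gives, via Lemma \ref{zicu8} applied additively, that projections commute inside additive contexts $u+f'+g'+v$. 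I would use this to swap the two terms and then reassemble the product with the brace axiom.

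The main obstacle is the additive identity $x+y=x-y+y+y$. The bookkeeping that moves a projection across the sum, using only the partial commutations in Lemma \ref{liwei}, is delicate. If the direct manipulation stalls, my first fallback is to prove instead that $(S,+)$ is completely regular and that $a+f+g+b=a+g+f+b$ holds for projections $f,g$, and then combine Lemma \ref{zicu8} with complete regularity. For complete regularity, Lemma \ref{1.2}(3) reduces $x\,\mathcal H\,(x+x)$ to equalities of projections, which should follow from (i) applied to $x+x$.
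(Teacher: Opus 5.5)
You have the right reductions (Lemma \ref{zicu9} read additively, and Lemma \ref{zicu8}), but there is a genuine gap: neither target identity is actually derived, and the concrete steps you outline do not reach them. The paper itself gives no proof here; it imports the result as Theorem 3.16 of \cite{Liu-Wang}, so your argument has to stand on its own.

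\textbf{Additive part.} Applying (ii) with $x-x$ only yields $x+y=x+(x-x)+y$. That inserts the projection attached to $x$, not $-y+y$. Your (i), $ey=y+e$, handles a projection added on the \emph{right}, so it does not rewrite $(x-x)+y$. You also give no mechanism for trading $x-x$ for $-y+y$, and your fallback is equally unargued. The missing idea is to multiply $x+y$ on the left by $f=-y+y=yy^\ast$, which fixes $y$. By (i), $f(x+y)=x+y+f=x+y$. By the brace axiom, $f(x+y)=fx-f+fy=(x+f)+f+y=x+f+y$. Hence $x+y=x-y+y+y$, and Lemma \ref{zicu9} applies.

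\textbf{Multiplicative part.} You cannot leave $b$ outside. Right multiplication does not distribute over $+$, and in general $a(g+f)=afg\neq agf=a(f+g)$. For instance, in $I\times G\times I$ from Lemma \ref{square1}, take $a=(i,x,j)$, $f=(n,1,n)$, $g=(m,1,m)$ with $n\neq m$; these products are $(i,x,m)$ and $(i,x,n)$. So no swap at the level of $ag-a+af$ versus $af-a+ag$ can succeed.

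First absorb $b$ using (i) twice: $fgb=f(b+g)=b+g+f$. Then $afgb=a(b+g+f)=ab+q_g+q_f$ with $q_h=-a+ah$, and symmetrically $agfb=ab+q_f+q_g$.

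The fact you need is that each $q_h$ is a projection lying under $p=-a+a$:
\begin{itemize}
\item $q_h+q_h=-a+a(h+h)=q_h$;
\item $-q_h=-ah+a=q_h$, by $-x+xe=-xe+x$;
\item $p+q_h=q_h=q_h+p$.
\end{itemize}
These $q_h$ do equal $aha^\ast$, as you guessed, but that alone is not what makes the swap work. Once the additive part is proved, the additive form of Lemma \ref{zicu8} with $u=v=p$ gives $q_g+q_f=p+q_g+q_f+p=p+q_f+q_g+p=q_f+q_g$. Hence $afgb=agfb$, and Lemma \ref{zicu8} finishes.

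This whole route uses only $ex=x+e$, $-x+xe=-xe+x$, the brace axiom and the identities of Lemma \ref{lw6}.
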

To end this section, we recall some results on skew left braces which will be needed in our paper.  Let $(G,+, \cdot)$  be a skew left brace. From \cite{Mazzotta-Rybolowicz-Stefanelli}, one call
\begin{align*}
    \mathcal{D}_r(G)=\{t \in S  \mid (x+y)t=xt-t+yt \mbox{ for all }x,y \in G\}
\end{align*}
the set of \emph{right distributors} of $(G,+, \cdot)$.
\begin{lemma}[Lemma 3.3 in \cite{Mazzotta-Rybolowicz-Stefanelli}]\label{ma}Let $(G, +, \cdot)$ be a skew left brace and $z\in G$. Then
$z\in \mathcal{D}_r(G)$ if and only if $(a-b+c)z=az-bz+cz$ for all $a,b,c\in G$.

\end{lemma}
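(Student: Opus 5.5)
The lemma says that $z\in\mathcal{D}_r(G)$ exactly when right multiplication by $z$ also respects the three-term expression $a-b+c$. I would prove the two implications separately, using only the skew left brace axiom (\ref{skew brace}), the group structure of $(G,+)$ and $(G,\cdot)$, and the fact that the two groups share the identity $0=1$.

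The backward implication ($\Leftarrow$) is a specialization. Put $b=0$, which is also the multiplicative identity $1$. Then $bz=z$, and the hypothesis gives
\[
(a+c)z=(a-0+c)z=az-z+cz ,
\]
which is exactly the defining condition of a right distributor.

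For the forward implication ($\Rightarrow$), assume $(x+y)z=xz-z+yz$ for all $x,y\in G$. The first step is to find how $z$ acts on additive inverses, that is, to compute $(-b)z$. Apply the assumption to $x=b$, $y=-b$:
\[
z=0\cdot z=(b-b)z=bz-z+(-b)z .
\]
Hence $(-b)z=z-bz+z$. The second step applies the distributor property twice:
\[
(a-b+c)z=az-z+(-b)z-z+cz=az-z+(z-bz+z)-z+cz=az-bz+cz ,
\]
where the final equality uses cancellation in the group $(G,+)$.

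Both steps are short computations. The only delicate point is the identification $0\cdot z=1\cdot z=z$, which depends on the additive and multiplicative identities coinciding; that fact is recalled right after (\ref{skew brace}).
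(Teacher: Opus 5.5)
Your proof is correct. Setting $b=0=1$ gives the distributor identity. In the other direction, $z=(b-b)z=bz-z+(-b)z$ yields $(-b)z=z-bz+z$. Two applications of the distributor identity and cancellation in $(G,+)$ then give $(a-b+c)z=az-bz+cz$.

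The paper gives no proof of its own here; it imports the statement from Mazzotta, Rybo{\l}owicz and Stefanelli. The natural comparison is therefore with its $\star$-analogue, Lemma \ref{shou14}.

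\begin{itemize}
\item The direction from the three-term identity to the distributor property uses your specialization. The projection $zz^\ast$ replaces $1$ (via $z=zz^\ast z$), followed by a componentwise check that $(a-zz^\ast+c)z=(a+c)z$.
\item The direction from the distributor property to the three-term identity is not done by computing $(-b)z$. Instead the paper passes through the strong semilattice decomposition of Theorem \ref{main1} and Lemmas \ref{shou15} and \ref{shou13}. It then applies the present lemma inside the group $G$.
\end{itemize}

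This detour is forced by the step you flag as delicate. In a dual weak left $\star$-brace one only has $(b-b)z=b^\ast bz$, which need not equal $z$. So your elementary computation is exactly the group-level input that the paper transports componentwise.
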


Let $(G,+, \cdot)$  be a skew left brace and $x,y,t\in G$. Define four maps $\check{\sigma}_x^t, \check{\tau}_y^t, \hat{\sigma}_x^t, \hat{\tau}_y^t$ from $G$ to $G$  by  setting  $$\check{\sigma}^t_x(y)=xy-xt + t,\,\,\,\, \check{\tau}^t_y(x)=(\check{\sigma}^t_x(y))^{-1}xy=\left(xy-xt + t\right)^{-1}  xy,$$
$$\hat{\sigma}^t_x(y)=-xt + xyt,\,\,\,\, \hat{\tau}^t_y(x)=(\hat{\sigma}^t_x(y))^{-1}xy=\left(-xt + xyt\right)^{-1}  xy,$$
for all $x,y \in G$. Define two maps from $G\times G$ to $G\times G$ by setting
$$\check{r}^G_t(x,y)=\left(\check{\sigma}^t_x(y), \check{\tau}^t_y(x)\right),\,\, \hat{r}^G_t(x,y)=\left(\hat{\sigma}^t_x(y), \hat{\tau}^t_y(x)\right)$$
for all $x, y \in G$.

\begin{lemma}[\cite{DoRy22x,Mazzotta-Rybolowicz-Stefanelli}]\label{zhuyin1}
Let $(G,+, \cdot)$  be a skew left brace with identity $1$ and $x,y,t\in G$.
\begin{itemize}
\item[(1)] $\mathcal{D}_r(G)$ is subgroup of $(G, \cdot)\,\,\,\,\, (\text{\cite[Proposition 2.5]{DoRy22x}}).$
\item[(2)] If $t\in \mathcal{D}_r(G)$, then $t^{-1}\in \mathcal{D}_r(G)$ and $\check{r}^G_{t^{-1}}$ is a non-degenerate and bijective solution of the Yang-Baxter equation, whose inverse map is $\hat{r}^G_{t}$, and $(\hat{\sigma}^t_x)^{-1}=\hat{\sigma}^{t^{-1}}_{x^{-1}}$, $(\hat{\tau}^t_x)^{-1}=\hat{\sigma}^{t}_{x^{-1}}$.
    $($\cite[Theorem 2.6, Proposition 2.12]{DoRy22x}, \cite[Remark 3.2]{Mazzotta-Rybolowicz-Stefanelli}$)$
\item[(3)] $\hat{r}^G_{t}$ is  a solution of the Yang-Baxter equation if and only if $t\in \mathcal{D}_r(G)$. In this case,
$\hat{\tau}^t: (S,\cdot)\rightarrow {\rm{Map}}(S), x\mapsto \hat{\tau}^t_x$ is an anti-homomorphism, and $\hat{\sigma}^t: (S,\cdot)\rightarrow {\rm{Map}}(S), x\mapsto \hat{\sigma}^t_x$ is a homomorphism if and only if $wt=t+w$ for all $w\in G$.  $($\cite[Theorem 3.9, Lemma 3.7 (2) and Proposition 3.11]{Mazzotta-Rybolowicz-Stefanelli}$)$
\item[(4)] $\hat{\sigma}^t_x   \hat{\sigma}^t_1  (x^{-1}(xt+y)t^{-1})=-xt+(x+y)t$ and $\hat{\sigma}^t_1  \hat{\sigma}^t_x   (x^{-1}(xt+y)t^{-1})=-t+yt$.
 $(${\rm cf.} \cite[the fina part of the proof of Theorem 3.9]{Mazzotta-Rybolowicz-Stefanelli}$)$
\end{itemize}

\end{lemma}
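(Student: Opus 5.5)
Most items of Lemma \ref{zhuyin1} are quoted from \cite{DoRy22x,Mazzotta-Rybolowicz-Stefanelli}, so the plan is to cite them and to check directly the parts that can be checked with a short computation. The only tools are the skew brace axiom (\ref{skew brace}) and Lemma \ref{ma}. I will use three consequences of (\ref{skew brace}) repeatedly: $x\cdot 1=x$, the identities of $(G,+)$ and $(G,\cdot)$ coincide, and $x(-a)=x-xa+x$. The last one follows from $x=x(a-a)=xa-x+x(-a)$.

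For (1), I show that $\mathcal D_r(G)$ is closed under products, contains $1$, and is closed under inverses.
\begin{itemize}
\item Clearly $1\in\mathcal D_r(G)$.
\item If $t,s\in\mathcal D_r(G)$, then $(x+y)ts=(xt-t+yt)s$. Lemma \ref{ma} applied to $s$ turns this into $xts-ts+yts$, so $ts\in\mathcal D_r(G)$.
\item If $t\in \mathcal D_r(G)$, apply Lemma \ref{ma} to $t$:
\[
(xt^{-1}-t^{-1}+yt^{-1})t=x-1+y=x+y.
\]
Right-multiplying by $t^{-1}$ gives $(x+y)t^{-1}=xt^{-1}-t^{-1}+yt^{-1}$, so $t^{-1}\in\mathcal D_r(G)$.
\end{itemize}
The first half of (2), that $t^{-1}\in\mathcal D_r(G)$, is exactly this last step.

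For the remaining parts of (2) and for (3), I will rely on the cited theorems rather than reprove them. A self-contained route would verify the three conditions (\ref{jie1})--(\ref{jie3}) for $\hat r^G_t$, using Lemma \ref{ma} to distribute right multiplication by $t$ over expressions of the form $a-b+c$. This is the step I expect to be the main obstacle, since (\ref{jie3}) involves a long chain of rewritings; I would quote \cite[Theorem 3.9]{Mazzotta-Rybolowicz-Stefanelli} and \cite[Theorem 2.6]{DoRy22x} for it. Bijectivity, the formula for the inverse map, and the inverse formulas for $\hat\sigma^t_x$ and $\hat\tau^t_x$ are then checked by composing the defining formulas, as in \cite[Proposition 2.12]{DoRy22x}.

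For (4), put $w=x^{-1}(xt+y)t^{-1}$.

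First, $\hat\sigma^t_1(w)=-t+wt=-t+x^{-1}(xt+y)$. Call this element $u$. Expanding with (\ref{skew brace}) and the formula for $x(-t)$:
\[
xu=x(-t)-x+xx^{-1}(xt+y)=x-xt+x-x+xt+y=x+y.
\]
Hence $\hat\sigma^t_x(u)=-xt+xut=-xt+(x+y)t$, which is the first identity.

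Second, $xwt=xt+y$, so $\hat\sigma^t_x(w)=-xt+xt+y=y$. Therefore $\hat\sigma^t_1\hat\sigma^t_x(w)=\hat\sigma^t_1(y)=-t+yt$, which is the second identity.
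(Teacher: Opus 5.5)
Your proposal is correct wherever you give an argument. It is also more self-contained than the paper, which gives no proof of Lemma~\ref{zhuyin1}: every item is quoted from \cite{DoRy22x,Mazzotta-Rybolowicz-Stefanelli}, and (4) is taken from the end of the proof of \cite[Theorem 3.9]{Mazzotta-Rybolowicz-Stefanelli}.

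You cite only for the hard parts, as the paper does: the solution, non-degeneracy and bijectivity statements in (2), and all of (3). You derive (1), the membership $t^{-1}\in\mathcal{D}_r(G)$, and (4) directly from (\ref{skew brace}) and Lemma~\ref{ma}, and these computations check out:
\begin{itemize}
\item closure under products is Lemma~\ref{ma} for $s$ with $a=xt$, $b=t$, $c=yt$;
\item closure under inverses is Lemma~\ref{ma} for $t$, followed by right multiplication by $t^{-1}$;
\item for (4), $x\bigl(-t+x^{-1}(xt+y)\bigr)=x+y$ and $\hat\sigma^t_x\bigl(x^{-1}(xt+y)t^{-1}\bigr)=y$.
\end{itemize}
What your route buys is that the hypotheses become visible. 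It shows that (4) is a formal identity valid for \emph{every} $t\in G$, with no distributor assumption. This is what the ``only if'' direction of Theorem~\ref{main2} needs, because there (4) is applied to a $t$ not yet known to lie in $\mathcal{D}_r(G)$. The paper's bare citation does not make this explicit.

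One sentence of your plan should be withdrawn. The last formula in (2), $(\hat\tau^t_x)^{-1}=\hat\sigma^t_{x^{-1}}$, cannot be ``checked by composing the defining formulas'', because as printed it is false. Take the trivial skew brace on a non-abelian group, so that $+$ coincides with $\cdot$ and $-w=w^{-1}$, and take $t=1\in\mathcal{D}_r(G)$. Then $\hat\sigma^1_w(x)=w^{-1}wx=x$, so every $\hat\sigma^1_w$, including $\hat\sigma^1_{x^{-1}}$, is the identity. On the other hand $\hat\tau^1_x(w)=x^{-1}wx$, whose inverse $w\mapsto xwx^{-1}$ is not the identity.

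This is a misprint in the lemma that you inherit, not a defect of your method, but you should not claim it can be verified. By contrast, $(\hat\sigma^t_x)^{-1}=\hat\sigma^{t^{-1}}_{x^{-1}}$ does follow by direct composition, using $(-c)s=s-cs+s$ for $s\in\{t,t^{-1}\}\subseteq\mathcal{D}_r(G)$.
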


\section{Dual weak left $\star$-braces}
In this section, we introduce the notions of dual weak left $\star$-braces and square skew left braces, respectively,  and establish their algebraic structures.
\begin{defn}\label{xxx2}
A  weak left $\star$-brace $(S,+,\cdot,-,\ast)$ is called a {\em dual weak left $\star$-brace} if $x-x=x^\ast x \mbox{ for all } x\in S.$
\end{defn}

\begin{lemma}\label{likang5}
Let $(S,+,\cdot,-,\ast)$ be a dual weak left $\star$-brace, $x,y\in S$ and $e\in P(S)$.  Then $(S, \cdot, \ast)$ is completely regular, orthodox and locally inverse and $xe=e+x$.
\end{lemma}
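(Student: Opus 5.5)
The first assertion should follow directly from Lemma \ref{zicu9}: it suffices to check that $xy=xy^\ast yy$ for all $x,y\in S$. By Lemma \ref{liwei} we have $x(y-y)y=xy$ in any weak left $\star$-brace. The dual axiom of Definition \ref{xxx2} gives $y-y=y^\ast y$. Substituting yields $xy=xy^\ast yy$, so Lemma \ref{zicu9} shows that $(S,\cdot,\ast)$ is completely regular, orthodox and locally inverse.

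For the identity $xe=e+x$ with $e\in P(S)$, I would work with the following tools:
\begin{itemize}
\item the left distributive law $x(y+z)=xy-x+xz$;
\item the two normalisations $-x+x=xx^\ast$ and $x-x=x^\ast x$, whence $x=xx^\ast+x=x+x^\ast x$;
\item the facts from Lemma \ref{liwei}: $ey=y+e$, $-x+xe=-xe+x$ and $e+x+y=e+x+e+y$.
\end{itemize}
The idea is to expand $x$ applied to a sum containing $e$ and a projection attached to $x$. For instance,
\[
x(x^\ast x+e)=xx^\ast x-x+xe=x-x+xe=x^\ast x+xe .
\]
On the other hand, $x^\ast x+e=e\,x^\ast x$ by Lemma \ref{liwei}. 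Together with the first part and Lemma \ref{zicu8} (projections commute inside products, since $(S,\cdot,\ast)$ is orthodox and locally inverse), this lets me rewrite $x\,e\,x^\ast x$ as $xe\,x^\ast x$ or as $xx^\ast x\,e=xe$.

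This should give $x^\ast x+xe=xe$. The remaining task is to relate $xe$ to $e+x$. Two candidate routes:
\begin{itemize}
\item Use $-x+xe=-xe+x$: add $x$ on the left and use $x-x=x^\ast x$ to obtain $x^\ast x+xe=x-xe+x$. Combined with the previous display, this gives $xe=x-xe+x$.
\item Alternatively, compute $x(e+x^\ast x)$ symmetrically, or use $e+x=e\cdot x$-type identities coming from Lemma \ref{liwei} with the roles reversed.
\end{itemize}
Either route aims to show that $xe$ and $e+x$ are both the unique inverse-type element in the completely regular (hence union of groups) semigroup $(S,+)$ lying in the same $\mathcal H$-class and satisfying the same equation. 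One can then conclude equality by cancellation within that additive group $\mathcal H$-class. This uses Lemma \ref{zicu9tiaoxingzhi}, which says $(S,+)$ is completely regular.

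I expect the main obstacle to be this last step: identifying the additive $\mathcal H$-class of $xe$ and of $e+x$, and justifying cancellation. I would first try to show $-(xe)+xe=-(e+x)+(e+x)$ and $xe-xe=(e+x)-(e+x)$ via the dual axiom, i.e.\ both equal $(xe)(xe)^\ast$ and $(xe)^\ast xe=ex^\ast xe$ respectively. Then an equation such as $xe=x-xe+x$ together with the analogous one for $e+x$ forces equality inside the common maximal subgroup.
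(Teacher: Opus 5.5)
\textbf{The first part is correct; the second part contains a false step.}

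Your first part is correct and is exactly the paper's argument: $xy=x(y-y)y=xy^\ast yy$, followed by Lemma \ref{zicu9}.

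Lemma \ref{zicu8} gives $afgb=agfb$ only when the two projections have factors on \emph{both} sides. You use it to rewrite $x\,e\,x^\ast x$ as $xx^\ast x\,e=xe$ with nothing to the right of the projections, and this fails. Take the square skew left brace $I\times G\times I$ of Lemma \ref{square1}, with $x=(i,g,j)$ and $e=(k,1,k)$, $k\neq j$. Then $xex^\ast x=(i,g,j)=x$, while $xe=(i,g,k)$.

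Consequently the identities you derive next are also false in this example:
\begin{itemize}
\item $x^\ast x+xe=xe$ fails, since here $x^\ast x+xe=(j,1,j)+(i,g,k)=(i,g,j)=x$.
\item $xe=x-xe+x$ fails, since here $x-xe+x=x$.
\end{itemize}

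The endgame has two further problems. First, even for two elements in the same additive group $\mathcal H$-class, an equation of the form $u=x-u+x$ does not determine $u$. Already in a group it only says that $u-x$ has order at most $2$. So no cancellation argument of that shape can finish the proof. Second, your other route, $x(e+x^\ast x)=xx^\ast xe$, only yields the true but insufficient identity $xe-x+x=xe$.

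\textbf{The missing idea} is to expand $x$ against $e=e+e$ rather than against a sum involving $x^\ast x$:
\begin{enumerate}
\item $xe=x(e+e)=xe-x+xe=xe-xe+x$, using $-x+xe=-xe+x$.
\item The dual axiom turns $xe-xe$ into $(xe)^\ast xe=ex^\ast xe$.
\item By $ey=y+e$ this equals $e+x^\ast x+e=e+x-x+e$, so $xe=e+x-x+e+x$.
\item Write the last $x$ as $x-x+x$ and apply $e+a+b=e+a+e+b$ with $a=x-x$. This collapses the expression to $e+x-x+x=e+x$.
\end{enumerate}
No $\mathcal H$-class or cancellation argument is needed.
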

\begin{proof}We first observe that $xy=x(y-y)y=xy^\ast y y$ by Lemma \ref{liwei}. So Lemma \ref{zicu9} gives that $(S, \cdot, \ast)$ is completely regular, orthodox and locally inverse.
On the other hand, by Lemmas \ref{lw6} and \ref{liwei}, we have
$$xe=x(e+e)=xe-x+xe=xe-xe+x=(xe)^\ast xe +x=ex^\ast x e+x=e+x^\ast x +e+x$$$$=e+x-x+e+x=e+x-x+e+x-x+x=e+x-x+x=e+x.$$
This gives that $xe=e+x$.
\end{proof}
\begin{defn}
 A  dual weak left $\star$-brace $(S,+,\cdot,-,\ast)$ is called a {\em square skew left brace }if $(S, \cdot, \ast)$ is a completely simple semigroup.
\end{defn}
We first give the structure of a square skew left brace.
\begin{lemma}\label{square1}Let $I$ be a non-empty set and $(G,+,\cdot)$ is a skew left brace. Define two multiplications $\cdot$ and $+$ and two unary operations $\ast$ and $-$ on $S=I\times G\times I$ as follows: For all $(i,g,j), (k,h,l)\in S$,
$$(i,g,j)(k,h,l)=(i,gh,l),\, (i,g,j)^\ast=(j,g^{-1}, i);$$$$(i,g,j)+(k,h,l)=(k,g+h,j), -(i,g,j)=(j, -g,i).$$
Then $(S,+,\cdot,-,\ast)$ is a square skew left brace. Conversely, every  square skew left brace can be obtained in this way.
\end{lemma}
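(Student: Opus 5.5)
The proof has two directions, and I would handle them separately: a direct verification for the construction, and a Rees-type coordinatisation for the converse.

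\emph{Direct part.} On $S=I\times G\times I$ I would first check the regular $\star$-semigroup axioms (\ref{shishi1.2}) for $(S,\cdot,\ast)$.
\begin{itemize}
\item The product is associative, since $(i,gh k',l)$ does not depend on the bracketing.
\item $xx^\ast x=(i,gg^{-1}g,j)=x$, $x^{\ast\ast}=x$, and $((i,g,j)(k,h,l))^\ast=(l,h^{-1}g^{-1},i)=(k,h,l)^\ast(i,g,j)^\ast$.
\end{itemize}
For $(S,+,-)$ the coordinates are swapped: $(i,g,j)+(k,h,l)=(k,g+h,j)$, and associativity gives $(m,g+h+u,j)$ either way. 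The checks $x-x+x=x$, $-(-x)=x$ and $-(x+y)=(l,-h-g,k)=-y-x$ are equally routine.

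Next come the brace axioms. For $x=(i,g,j)$, $y=(k,h,l)$, $z=(m,u,n)$:
\begin{itemize}
\item $x(y+z)=(i,g,j)(m,h+u,l)=(i,g(h+u),l)$.
\item $xy-x+xz=(i,gh,l)+(j,-g,i)+(i,gu,n)$. Evaluating left to right, $(i,gh,l)+(j,-g,i)=(j,gh-g,l)$, and adding $(i,gu,n)$ gives $(i,gh-g+gu,l)$. This equals $x(y+z)$ by (\ref{skew brace}).
\item $-x+x=(j,-g,i)+(i,g,j)=(i,0,i)$ and $xx^\ast=(i,1,i)$; these agree because $0=1$ in a skew left brace.
\item $x-x=(j,0,j)=x^\ast x$, which is the dual condition.
\end{itemize}
Complete simplicity follows from Lemma \ref{wanquandan1}: $xyy^\ast x^\ast=(i,g hh^{-1}g^{-1},i)=(i,1,i)=xx^\ast$.

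\emph{Converse.} Let $S$ be a square skew left brace. By Lemma \ref{likang5}, $(S,\cdot,\ast)$ is orthodox and completely simple, so Lemma \ref{wanquandan} gives $afgb=ab$ for all $f,g\in P(S)$.

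The plan is to fix a projection $e$ and set $G=eSe$. Each element should then decompose as $x=(xx^\ast)(exe)(x^\ast x)$, with $I=P(S)$. This identity follows from $x=xx^\ast x x^\ast x$ together with Lemma \ref{wanquandan}, which lets us insert $e$'s freely: $x=xx^\ast\, x\, x^\ast x= xx^\ast e x e x^\ast x$.

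The map $\phi:x\mapsto (xx^\ast, exe, x^\ast x)$ is a bijection $S\to P(S)\times G\times P(S)$, with inverse $(f,g,f')\mapsto fgf'$. To see this, note that $(fgf')(fgf')^\ast=fgf'g^\ast f=f\cdot f=f$; this uses $gg^\ast=e$ in $G$ and $fef=f$, the latter from Lemma \ref{wanquandan}.

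The multiplicative structure then matches the statement:
\begin{itemize}
\item $(xy)(xy)^\ast=xyy^\ast x^\ast=xx^\ast$ by Lemma \ref{wanquandan1}.
\item $exye=exe\cdot eye$, by inserting $e$ between $x$ and $y$.
\item $x^\ast= (x^\ast x)(ex^\ast e)(xx^\ast)$, and $ex^\ast e$ is the inverse of $exe$ in $G$.
\end{itemize}
So $(G,\cdot)$ is a group with identity $e$, because $eSe$ is an $\mathcal H$-class in a completely simple semigroup.

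For the additive side, Lemma \ref{likang5} gives $e+x=xe$ and dually I expect $x+e=ex$ (from Lemma \ref{liwei}, $ex=x+e$). Hence $e+x+e=exe$ and $G$ is closed under $+$, with identity $e$ and inverse $-$. The axiom (\ref{skew brace}) restricts to $G$, so $(G,+,\cdot)$ is a skew left brace.

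It remains to show
$$\phi(x+y)=(yy^\ast,\ exe+eye,\ xx^{\ast}\text{-swapped appropriately}),$$
matching $(k,g+h,j)$: the first coordinate of $x+y$ should be $yy^\ast$ and the last should be $x^\ast x$. I would obtain these from
$$(x+y)-(x+y)=x+y-y-x=x+y^\ast y-x,$$
and then use $f+x=xf$ with the additive commutation of projections (Lemma \ref{zicu8} for $+$, via Lemma \ref{zicu9tiaoxingzhi}) to reduce this to $x^\ast x$. The analogous computation handles $-(x+y)+(x+y)$.

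The middle coordinate is
$$e(x+y)e=e+x+y+e=(e+x+e)+(e+y+e),$$
using $e+x+y=e+x+e+y$ from Lemma \ref{liwei}. Finally, $-x$ corresponds to $(x^\ast x,-exe,xx^\ast)$, because $-x+x=xx^\ast$ and $x-x=x^\ast x$.

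\emph{Main obstacle.} I expect the hardest step to be these additive coordinate identities, that is, showing $(x+y)(x+y)^\ast=yy^\ast$ and $(x+y)^\ast(x+y)=x^\ast x$, which requires careful juggling of $f+x=xf$, $x+f=fx$ and the rectangular-band behaviour $fgf=f$ of projections.
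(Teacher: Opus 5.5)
Your proposal is correct and follows essentially the paper's route. You fix $e\in P(S)$, take the group $eSe=\{h\in S\mid hh^\ast=h^\ast h=e\}$ with index set $P(S)$, and match $x$ with $(xx^\ast,exe,x^\ast x)$ via $x=xx^\ast\,(exe)\,x^\ast x$. You verify that this map is an isomorphism, whereas the paper verifies its inverse $(i,g,j)\mapsto igj$, using the same rules $e+x=xe$, $x+e=ex$ and $e+x+y=e+x+e+y$.

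One step needs tightening. Commutation of projections alone cannot turn $x+y^\ast y-x$ into $x^\ast x$. With $f=y^\ast y$ and $p=-x+x$, write $x+f-x=x+p+f+p-x$ and use $p+f+p=pfp=p$; this is the rectangular-band identity you already identify as the key ingredient. There is also a harmless slip in the direct part: $-(x+y)=(j,-h-g,k)$, not $(l,-h-g,k)$.
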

\begin{proof}
By simple calculations, we can check the direct part routinely. Now we consider the converse part. Let $(S,+,\cdot,-,\ast)$  be a square skew left brace. Denote $J=P(S)$ and fix an element $e$ in $P(S)$. Denote $$H=\{h\in S\mid hh^\ast=h^\ast h=e\}.$$ Then it is routine to check that $(H, +, \cdot)$ forms a skew left brace with identity $e$ and $h^{-1}=h^\ast$ for all $h\in H$. By the direct part, we obtain a square skew left brace $T=J\times H\times J$. Define a map
$\psi: T\rightarrow S, (i, g, j)\mapsto igj$. Then $ge=g$ and $ejke=e, i^\ast=i,j^\ast=j$ by Lemmas \ref{1.2} and \ref{wanquandan}, and so
$$\psi((i,g,j)(k,h,l))=\psi(j,gh,l)=jghl=igehl$$$$=igejkehl=igjkhl=\psi((i,g,j)) \psi((k,h,l)),$$
$$\psi((i,g,j)^\ast)=\psi(j,g^\ast, i)=jg^\ast i=j^\ast g^\ast i^\ast=(igj)^\ast=(\psi((i,g,j)))^\ast.$$ This gives that $\psi$   preserves the unary operations $\cdot$ and $\ast$.

Let $a\in S$. Then $aa^\ast, a^\ast a\in P(S)=J$. Observe that $$(eae)(eae)^\ast=eaee^\ast a^\ast e^\ast=eaea^\ast e=e$$ by Lemma \ref{wanquandan}. Dually, $(eae)^\ast(eae)=e$. This implies  $eae\in H$ and $(aa^\ast, eae, a^\ast a)\in T$. Moreover, $$\psi(aa^\ast, eae, a^\ast a)=aa^\ast eae a^\ast a=aa^\ast eaa^\ast aea^\ast a=aa^\ast a=a$$ by Lemmas \ref{1.2} and \ref{wanquandan}. This shows that $\psi$ is surjective.

Let $(i,g,j), (k, h, l)\in T$ and $$igj=\psi((i,g,j))=\psi((k,h,l))=khl.$$ Then we  have $iegej=kehel$ as $e$ is the identity of $H$, and so
$$g=ege=eiegeje=ekehele=ehe=h$$ by Lemma \ref{wanquandan}. Thus $igj=kgl$. Observe that $ge=e=gg^{-1}, eje=e$ and $$igjeg^{-1}=igejeg^{-1}=igeg^{-1}=igg^{-1}=ie$$ by Lemma \ref{wanquandan}. Dually,  $kgl eg^{-1}=ke$. This implies that $ie=ke$, and so $i=iei=kei=ki$ by Lemma \ref{wanquandan}. Dually, we have $k=ik$. So $i=i^\ast=(ki)^\ast=i^\ast k^\ast =ik=k$. By symmetry, we obtain that $j=l$. Thus $(i,g,j)=(k, h, l)$. This give that  $\psi$ is injective.

On the other hand, let $(i,g,j), (k,h,l)\in S$. Then   $elie=e$ by Lemma \ref{wanquandan}.
In view of Lemmas \ref{lw6}, \ref{liwei} and \ref{likang5} and the fact that $i,j,k,l,e\in P(S), g,h\in H$ and $e$ is the identity of the skew brace $H$, we obtain
$$\psi((i,g,j)+(k,h,l))=\psi(k,g+h, j)=k(g+h)j=(g+h+k)j=j+g+h+k$$$$=j+g+e+h+k=j+g+elie+h+k=j+g+e+i+l+e+h+k$$$$=j+g+i+l+h+k=igj+khl=\psi((i,g,j))+\psi((k,h,l)),$$
$$\psi(-(i,g,j))=\psi((j,-g,i))=j(-g)i$$$$=i-g+j=-i-g-j=-(j+g+i)=-(igj)=-\psi((i,g,j)).$$
By the above statements, we have $T\cong S$ as dual weak left $\star$-braces.
\end{proof}
We next give a construction method for general dual weak left $\star$-braces. Recall that a {\em lower semilattice} $(Y, \leq)$ is a partially ordered set  in which any two elements $\alpha$ and $\beta$ have a greatest lower bound $\alpha \beta$.
\begin{theorem}\label{main1}
Let $Y$ be a lower semilattice and $\left\{(B_{\alpha}, +, \cdot, -, \ast)\ \left|\ \alpha \in Y\right.\right\}$ be a family of disjoint square skew left braces. For each pair $\alpha,\beta$ of elements of $Y$ such that $\alpha \geq \beta$, let $\phii{\alpha}{\beta}: B_{\alpha}\to B_{\beta}$ be a  square skew left brace homomorphism such that
\begin{enumerate}
    \item[(1)] $\phii{\alpha}{\alpha}$ is the identical automorphism of $B_{\alpha}$, for every $\alpha \in Y$,
    \item[(2)] $\phii{\beta}{\gamma}{}\phii{\alpha}{\beta}{} = \phii{\alpha}{\gamma}{}$, for all $\alpha, \beta, \gamma \in Y$ such that $\alpha \geq \beta \geq \gamma$.
\end{enumerate}
Define two binary operations $+$ and $\cdot$ on $S = \bigcup\left\{B_{\alpha}\ \left|\ \alpha\in Y\right.\right\}$ as follow:
\begin{align*}
    a+b= \phii{\alpha}{\alpha\beta}(a)+\phii{\beta}{\alpha\beta}(b)
    \quad\text{and}\quad
     a b= \phii{\alpha}{\alpha\beta}(a) \phii{\beta}{\alpha\beta}(b),
\end{align*}
for all $a\in B_{\alpha}$ and $b\in B_{\beta}$. Then $(S, +, \cdot, -, \ast)$ forms a dual weak left $\ast$-brace,  called a \emph{strong semilattice of square skew left braces $B_{\alpha}$} and denoted by $S=[Y; B_\alpha;\phii{\alpha}{\beta}]$.  Conversely, any dual weak left $\star$-brace can be constructed in this way.
\end{theorem}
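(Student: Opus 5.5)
The proof splits into a direct part and a converse part.

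\emph{Direct part.} We use the standard facts on strong semilattices of semigroups. Since every $\phii{\alpha}{\beta}$ preserves $+$, $\cdot$, $-$ and $\ast$, and conditions (1) and (2) hold, $(S,+)$ and $(S,\cdot)$ are semigroups. The unary operations are defined componentwise, with $-a$ and $a^\ast$ computed inside the component $B_\alpha$ containing $a$. A product $ab$ with $a\in B_\alpha$, $b\in B_\beta$ is evaluated in $B_{\alpha\beta}$ after applying the structure maps. Hence each identity can be checked by pushing all letters into the component indexed by the meet of their indices, using that the $\phii{\alpha}{\beta}$ are homomorphisms. This gives the regular $\star$-semigroup axioms. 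For example,
\[
(ab)^\ast=\phii{\alpha}{\alpha\beta}(b)^\ast\phii{\alpha}{\alpha\beta}(a)^\ast=b^\ast a^\ast,
\]
and $aa^\ast a$, $x-x+x$ and $-x+x=xx^\ast$, $x-x=x^\ast x$ all hold inside the single component $B_\alpha$.

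For the brace axiom $x(y+z)=xy-x+xz$ with $x\in B_\alpha$, $y\in B_\beta$, $z\in B_\gamma$, the left side lies in $B_{\alpha\beta\gamma}$. The right side involves $-x\in B_\alpha$, so it also lands in $B_{\alpha\beta\gamma}$. Both sides equal the corresponding expression in $B_{\alpha\beta\gamma}$ for the images $\phii{\alpha}{\alpha\beta\gamma}(x)$, $\phii{\beta}{\alpha\beta\gamma}(y)$, $\phii{\gamma}{\alpha\beta\gamma}(z)$, so the axiom follows from the axiom in that component.

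\emph{Converse part.} Let $S$ be a dual weak left $\star$-brace. By Lemma \ref{likang5}, $(S,\cdot,\ast)$ is completely regular, orthodox and locally inverse. By Lemma \ref{zicu9tiaoxingzhi}, $(S,+,-)$ has the same properties. A completely regular semigroup is a semilattice $Y=S/\mathcal{J}$ of completely simple semigroups, and I take the $\mathcal J$-classes of $(S,\cdot)$ as the $B_\alpha$. Using $xx^\ast=-x+x$, $x^\ast x=x-x$ and Lemma \ref{1.2}(3), the relations $\mathcal R$ and $\mathcal L$ of $(S,+)$ coincide with $\mathcal L$ and $\mathcal R$ of $(S,\cdot)$ respectively. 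Hence $\mathcal D$, which equals $\mathcal J$ by complete regularity, is the same equivalence for both operations. I then check that each $B_\alpha$ is closed under $+$, $-$, $\cdot$ and $\ast$, so each $B_\alpha$ is a dual weak left $\star$-brace whose multiplicative reduct is completely simple, that is, a square skew left brace.

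It remains to show that the semilattice decomposition is strong, and that the same structure maps work for both operations. For $\alpha\ge\beta$, fix a projection $e\in P(B_\beta)$ and define
\[
\phii{\alpha}{\beta}(a)=eae \quad\text{or equivalently}\quad \phii{\alpha}{\beta}(a)=a+e
\]
(using $xe=e+x$ from Lemma \ref{likang5} and $ex=x+e$ from Lemma \ref{liwei}). The candidate that avoids choosing $e$ is
\[
\phii{\alpha}{\beta}(a)=afa \text{ for suitable } f, \qquad\text{or}\qquad \phii{\alpha}{\beta}(a)=ab^\ast b\,b^\ast\text{-type terms}, \quad b\in B_\beta .
\]
The cleanest choice is $\phii{\alpha}{\beta}(a)=ab\,b^\ast$-style maps, shown independent of $b$ via Lemma \ref{zicu8} ($afgb=agfb$) and Lemma \ref{wanquandan}.

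\emph{Main obstacle.} The key step is to prove $ab=\phii{\alpha}{\alpha\beta}(a)\phii{\beta}{\alpha\beta}(b)$ and simultaneously $a+b=\phii{\alpha}{\alpha\beta}(a)+\phii{\beta}{\alpha\beta}(b)$. Concretely, I would try to show $ab=a(b^\ast b)\,(aa^\ast)b$-type identities, namely that projections from lower components can be inserted and commuted freely. This should follow from the identities $xe=e+x$ and $ex=x+e$, since they convert multiplicative insertion of projections into additive insertion, together with Lemma \ref{zicu8}. Once these hold, the homomorphism property of the $\phii{\alpha}{\beta}$ and the transitivity conditions (1) and (2) follow by routine computation.
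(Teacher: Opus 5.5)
Your direct part and the first half of the converse follow the paper: both reducts are completely regular and orthodox, $\mathcal{L}^+=\mathcal{R}^\cdot$ and $\mathcal{R}^+=\mathcal{L}^\cdot$ follow from $-x+x=xx^\ast$ and $x-x=x^\ast x$, so there is a common $\mathcal{D}=\mathcal{J}$ whose classes are square skew left braces. You should also check that the quotient semilattices $(Y,+)$ and $(Y,\cdot)$ coincide; this follows from $e+f=fe$ for $e,f\in P(S)$.

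\textbf{The gap: the structure maps.} The structure maps are the substance of the converse, and none of your candidates is correct. Test them on a single square skew left brace $S=I\times G\times I$ with $|I|\ge 2$ (Lemma~\ref{square1}), where $\phii{\alpha}{\alpha}$ must be the identity. Take $a=(k,g,l)$ and $e=(i_0,1,i_0)$. Then:
\begin{itemize}
\item[(i)] $eae=(i_0,g,i_0)$;
\item[(ii)] $a+e=ea=(i_0,g,l)$ by Lemma~\ref{liwei}, so this is not even equal to $eae$, contrary to your ``equivalently'';
\item[(iii)] $abb^\ast=(k,g,m)$ where $bb^\ast=(m,1,m)$, which depends on $b$;
\item[(iv)] for a projection $f$, $afa=(k,g^2,l)$.
\end{itemize}
None of these equals $a$, so condition (1) fails. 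The same computation after pushing into a lower component shows they also violate $ab=\phii{\alpha}{\alpha\beta}(a)\phii{\beta}{\alpha\beta}(b)$. No use of Lemmas~\ref{zicu8} and \ref{wanquandan} can repair this, because the structure maps are forced. In any such decomposition, $\phii{\alpha}{\beta}(a)=aa^\ast ea$ for every $e\in P(B_\beta)$: put $p=\phii{\alpha}{\beta}(a)\phii{\alpha}{\beta}(a)^\ast$ and use $pep=p$ from Lemma~\ref{wanquandan}.

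\textbf{What is needed.} You need a formula that keeps both rectangular coordinates of $a$; sandwiching $e$ between $aa^\ast$ and $a$ does exactly this. The paper defines $\psi_{\alpha,\beta}(a)=aa^\ast ea$ and proves it is independent of $e$ using $efe=e$, $fef=f$ and Lemma~\ref{zicu8}. It then settles the issue you flagged, that ``the same structure maps work for both operations'', by proving
\[
a+e-a+a=aa^\ast ea=a-a+e+a
\]
from $ex=x+e$, $xe=e+x$ and the two projection identities. So $\psi_{\alpha,\beta}$ is simultaneously the multiplicative and the additive instance of $x\mapsto xx^\ast ex$. With this in hand, the paper verifies by explicit computations with Lemmas~\ref{1.2}, \ref{zicu8} and \ref{wanquandan}:
\begin{itemize}
\item[(i)] $\psi_{\alpha,\beta}$ preserves $\cdot$ and $\ast$, and hence $+$ and $-$;
\item[(ii)] conditions (1) and (2) hold;
\item[(iii)] $ab=\psi(a)\psi(b)$ and $a+b=\psi(a)+\psi(b)$.
\end{itemize}
Your last paragraph only asserts that such identities ``should follow''. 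Once the correct map is in place, it has to be replaced by these computations.
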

\begin{proof}The direct part can be checked by routine calculations. To show the converse part, let $(S, +, \cdot, -, \ast)$ be a dual weak left $\star$-brace.
Then $(S, +, -)$ and $(S, \cdot, \ast)$ are completely regular, orthodox and locally inverse by Lemmas \ref{zicu9tiaoxingzhi} and \ref{likang5}. Since $xx^\ast=-x+x$ and $x^\ast x=x-x$ for all $x\in S$, we have ${\mathcal L}^+={\mathcal R}^\cdot$ and ${\mathcal R}^+={\mathcal L}^\cdot$ by Lemma \ref{1.2}, and so $ {\mathcal D}^+={\mathcal L}^+\circ {\mathcal R}^+={\mathcal R}^\cdot\circ {\mathcal L}^\cdot={\mathcal D}^\cdot.$
Denote ${\mathcal D}={\mathcal D}^+={\mathcal D}^\cdot.$ By \cite[Theorem II.4.5]{Petrich-Reilly},   ${\mathcal D}={\mathcal J}^+={\mathcal J}^\cdot$.
Denote $Y=S/{\mathcal D}=\{a{\mathcal D} \mid a\in S\}$.
By the statements before \cite[Theorem 4.1.3]{Howie}, $\mathcal D$ is a congruence on both $(S, +)$ and $(S, \cdot)$ such that $(Y,+)$ and $(Y, \cdot)$ are semilattices, respectively. By Lallement's Lemma (\cite[Lemma 2.4.3]{Howie}), we obtain that $Y=\{e{\mathcal D}\mid e\in E(S)\}$. By Lemma \ref{liwei},
$$e{\mathcal D}+f{\mathcal D}=(e+f){\mathcal D}=(fe){\mathcal D}=(f{\mathcal D}) (e{\mathcal D})=(e{\mathcal D}) (f{\mathcal D}) $$for all $e,f\in E(S)$.
This gives that $(Y, +)=(Y, \cdot)$. For all $\alpha\in Y$, denote $S_\alpha=\{a\in S\mid a{\mathcal D}=\alpha\}$ (i.e. each $S_\alpha$ is exact a $\mathcal D$-class of $S$). By the statements before \cite[Theorem 4.1.3]{Howie},  both $(S_\alpha, +)$ and $(S_\alpha, \cdot)$ are completely simple semigroups, and
\begin{equation}\label{xxx1}
S_\alpha S_\beta\subseteq S_{\alpha\beta}\mbox{ and }S_\alpha +S_\beta\subseteq S_{\alpha\beta}  \mbox{ for all } \alpha, \beta\in Y.
\end{equation}
Furthermore, we have $a{\mathcal D} a^\ast$ and $a\mathcal{D}(-a)$ for all $a\in S$ by Lemma \ref{1.2}, and so each $S_\alpha$  is closed under $-$ and $\ast$, respectively. Thus $(S_\alpha, +,\cdot, -,\ast)$ forms a square skew left brace.

Assume that $\alpha, \beta\in Y$ with $\alpha\geq \beta$ and fix an element $e\in P(S_\beta)$. By (\ref{xxx1}), we define
\begin{equation}\label{likang2}
\psi_{\alpha, \beta}: S_\alpha\rightarrow S_\beta, a \mapsto aa^\ast e a.
\end{equation}
Let $a\in S_\alpha$ and $f\in P(S_\beta)$. Then $$e+f+e=efe=e \mbox{ and }f+e+f=fef=f$$ by Lemma \ref{wanquandan} and \ref{liwei}. This together with Lemma \ref{zicu8} gives that $$aa^\ast ea=aa^\ast efe a=aa^\ast eefa=aa^\ast efa=aa^\ast eff a=aa^\ast fef a=aa^\ast f a.$$This yields that the definition of $\psi_{\alpha, \beta}$ is independent to the choice of $e$ in $P(S_\beta)$. In particular, since $aea^\ast\in P(S_\beta)$ by Lemma \ref{1.2}, it follows that
\begin{equation}\label{likang1}
\begin{aligned}
&a+e-a+a= a+e+aa^\ast =aa^\ast ea=aa^\ast (aea^\ast) a\\
&=aea^\ast a =(e+a)a^\ast a=a^\ast a+e+a=a-a+e+a.
\end{aligned}
\end{equation}
by (\ref{lw5}), Lemmas \ref{liwei} and \ref{likang5} and Definition \ref{xxx2}.
Let $\alpha, \beta, \gamma\in Y$ with $\alpha\geq \beta \geq \gamma$ and fix $e\in P(S_\alpha), f\in P(S_\beta), g\in P(S_\gamma)$.
Then  $gfg\in  P(S_\gamma)$ by (\ref{xxx1}). Since $S_\gamma$ is a square skew left brace, we have  $gfg=g(gfg)g=g$ by Lemma \ref{wanquandan}.
Let $a,b\in S_\alpha$. Then $a^\ast fa\in P(S)$ by Lemma \ref{1.2}. In view of Lemma \ref{zicu8}, we obtain  $$\psi_{\alpha \beta}(ab)=ab(ab)^\ast fab=abb^\ast a^\ast fab=a a^\ast fa bb^\ast b=aa^\ast fab.$$ By Lemma \ref{1.2} and (\ref{xxx1}), we have $a^\ast fa, a^\ast fa   f   a^\ast fa\in P(S_{\alpha\beta})$. This together with Lemma \ref{wanquandan}
gives that $a^\ast fa   f   a^\ast fa=a^\ast fa  (a^\ast fa   f   a^\ast fa)  a^\ast fa=a^\ast fa$, and so
$$\psi_{\alpha \beta}(a)\psi_{\alpha, \beta}(b)=(aa^\ast fa) (bb^\ast fb)=aa^\ast fa f bb^\ast b=aa^\ast fa fb=aa^\ast fa f a^\ast fa b=aa^\ast fab$$  by Lemma \ref{zicu8}. This shows that $\psi_{\alpha \beta}(ab)=\psi_{\alpha \beta}(a)\psi_{\alpha, \beta}(b)$. By Lemma \ref{1.2} and (\ref{likang1}),
$$\psi_{\alpha, \beta}(a^\ast)=a^\ast a^{\ast\ast}fa^\ast=a^\ast afa^\ast=a^\ast faa^\ast=(aa^\ast fa)^\ast=(\psi_{\alpha, \beta}(a))^\ast.$$
This shows that  $\psi_{\alpha,\beta}$ preserves  $\cdot$ and $\ast$.
By (\ref{likang1}) and the above statements, it follows that $\psi_{\alpha,\beta}$ also preserves  $+$ and $-$. This shows that $\psi_{\alpha,\beta}$  is a square skew left brace homomorphism  for all $\alpha, \beta\in Y$ with $\alpha\leq \beta$. Moreover,  for $a\in S_\alpha$ and $e\in P(S_\alpha)$, we have $aa^\ast e aa^\ast =aa^\ast$ by Lemma \ref{wanquandan}, and  so $\psi_{\alpha,\alpha}(a)=aa^\ast ea=aa^\ast eaa^\ast a=aa^\ast a=a$. This shows that
$\psi_{\alpha, \alpha}$ is the identical automorphism of $S_{\alpha}$ for every $\alpha \in Y$. Finally, since  $f\in P(S_\beta), g\in P(S_\gamma)$ and $\beta\geq \gamma$, we have $g, gfg\in P(S_\beta)$ by Lemma \ref{1.2} and (\ref{xxx1}), and so  $gfg= g(gfg)g=g$ by Lemma \ref{wanquandan}. This together with Lemma \ref{zicu8} gives that
$$\psi_{\beta, \gamma}\psi_{\alpha, \beta}(a)=\psi_{\beta, \gamma}(aa^\ast fa)=aa^\ast fa  (aa^\ast fa)^\ast g  aa^\ast fa
$$$$=aa^\ast fa  a^\ast faa^\ast   g  aa^\ast fa=aa^\ast gf a=aa^\ast gfg a=aa^\ast ga=\psi_{\alpha,\gamma}(a).$$
So $\psi_{\beta, \gamma}\psi_{\alpha, \beta}=\psi_{\alpha,\gamma}$. By direct part, we have the  dual weak left $\ast$-brace $[Y; S_\alpha;\psi_{\alpha,\beta}]$.
Let $a\in S_\alpha$ and $b\in S_{\beta}$ and fix $h\in P(S_{\alpha\beta})$. Then by Lemmas \ref{1.2}, \ref{zicu8} and \ref{wanquandan},  we have
$$\psi_{\alpha, \alpha\beta}(a)\psi_{\alpha, \alpha\beta}(b)=aa^\ast ha (bb^\ast h)b=aa^\ast ha (h bb^\ast) b=aa^\ast ha hb=a[(a^\ast ha) h] b=a[h a^\ast h a] hb$$$$=a[ h (a^\ast ha) h ]b\overset{{\rm Lemma} \ref{wanquandan}}{=}ahb\overset{{\rm Lemma} \ref{zicu8}}{
=}a[a^\ast a bb^\ast h  aa^\ast bb^\ast] b\overset{{\rm Lemma} \ref{wanquandan}}{=}aa^\ast a bb^\ast   b=ab.$$
By (\ref{likang1}) and the above statements,  we can also prove that $\psi_{\alpha, \alpha\beta}(a)+\psi_{\alpha, \alpha\beta}(b)=a+b$ for all $a\in S_\alpha$ and $b\in S_{\beta}$.
Thus $(S, +, \cdot, -, \ast)$ is indeed $[Y; S_\alpha;\psi_{\alpha,\beta}]$.
\end{proof}
\section{Deformed solutions on dual weak left $\star$-braces}
In this section, we introduce  distributors for dual weak left $\star$-braces and give some structural properties of them, and prove that the map deformed by each  distributor  is always a solution of the Yang-Baxter equation.

Let $(S,+,\cdot,-,\ast)$ be a dual weak left $\star$-brace. Then we call
\begin{align*}
    \mathcal{D}_r(S)=\{z \in S  \mid (a+b)z=az-z+bz \mbox{ for all }a,b \in S\}
\end{align*}
the set of \emph{right distributors} of $(S,+,\cdot,-,\ast)$.
\begin{lemma}\label{shou13}Let $S=I\times G\times I$  be a square skew left brace constructed in Lemma \ref{square1} and $z=(u,t,v)\in S$.  Then
$z\in \mathcal{D}_r(S)$ if and only if $t\in \mathcal{D}_r(G)$.
\end{lemma}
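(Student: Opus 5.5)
Since everything in Lemma \ref{shou13} is given by explicit coordinate formulas, the plan is a direct computation in $S=I\times G\times I$ using the operations of Lemma \ref{square1}. I will write both sides of the defining identity of $\mathcal{D}_r(S)$ for arbitrary $a=(i,g,j)$, $b=(k,h,l)$, and compare the three coordinates.

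First I compute the left-hand side. By Lemma \ref{square1}, $a+b=(k,g+h,j)$. Hence
$$(a+b)z=(k,g+h,j)(u,t,v)=(k,(g+h)t,v).$$

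Next I compute the right-hand side. We have $az=(i,gt,v)$, $bz=(k,ht,v)$ and $-z=(v,-t,u)$. Then
$$az-z=(i,gt,v)+(v,-t,u)=(v,gt-t,v),$$
and
$$az-z+bz=(v,gt-t,v)+(k,ht,v)=(k,gt-t+ht,v).$$
I will double-check this bookkeeping of outer indices carefully, since the rule $(i,g,j)+(k,h,l)=(k,g+h,j)$ swaps positions. The outer coordinates on the two sides agree: both are $k$ and $v$.

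So $(a+b)z=az-z+bz$ holds if and only if $(g+h)t=gt-t+ht$ in $G$. Since $i,j,k,l$ are arbitrary and $g,h$ range over all of $G$, we get that $z\in\mathcal{D}_r(S)$ if and only if $(g+h)t=gt-t+ht$ for all $g,h\in G$, that is, $t\in\mathcal{D}_r(G)$. For the backward direction, note that any $g,h\in G$ arise as middle coordinates of elements of $S$ because $I\neq\emptyset$.

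There is no real obstacle here; the only care needed is tracking the index swaps in the additive structure.
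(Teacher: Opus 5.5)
Your proposal is correct and is essentially the paper's proof: a direct coordinate computation showing $(a+b)z=(k,(g+h)t,v)$ and $az-z+bz=(k,gt-t+ht,v)$, so membership in $\mathcal{D}_r(S)$ reduces to the right-distributor identity for $t$ in $G$. Your bookkeeping of the outer indices is right, and cleaner than the paper's, which writes $xz$ where $xt$ is meant; one small relabeling is that $I\neq\emptyset$ is needed for the implication $z\in\mathcal{D}_r(S)\Rightarrow t\in\mathcal{D}_r(G)$, where arbitrary $g,h\in G$ must be realized as middle coordinates, while the converse needs nothing extra.
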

\begin{proof}Let $a=(i,x,j), b=(k,y,l)\in S$. Then
$$(a+b)z=(k,x+y,j)(u,t,v)=(k, (x+y)t, v),$$
$$az-z+bz=(k,xz-t+yt,v)$$
This implies that $(a+b)z=az-z+bz$ for all $a,b\in S$ if and only if $xz-t+yt$ for all $x,y\in G$. Thus the desired result holds.
\end{proof}
\begin{lemma}\label{shou15}Assume that  $S=[Y; S_\alpha;\phii{\alpha}{\beta}]$ is a strong semilattice of a family of square skew left braces and $z\in S_\delta$ for some $\delta\in Y$. If $z\in  \mathcal{D}_r(S)$, $\xi\in  Y$ and $\xi\leq \delta$,  then $\phi_{\delta, \xi}(z)\in \mathcal{D}_r(S_\xi)$.
\end{lemma}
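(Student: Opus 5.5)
Since $\phi_{\delta,\xi}$ is a square skew left brace homomorphism, the plan is to transport the distributor identity for $z$ down to $S_\xi$ by choosing test elements in $S_\xi$ itself. Fix $a,b\in S_\xi$. Because $z\in\mathcal{D}_r(S)$, we have $(a+b)z=az-z+bz$ in $S$. We then compute each term using the multiplication of the strong semilattice $[Y;S_\alpha;\phii{\alpha}{\beta}]$.

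Since $a+b\in S_\xi$ and $\xi\delta=\xi$, the definition of the product gives
$$(a+b)z=\phii{\xi}{\xi}(a+b)\,\phii{\delta}{\xi}(z)=(a+b)\,\phii{\delta}{\xi}(z),$$
by condition (1), where the product on the right is computed inside $S_\xi$. Likewise $az=a\,\phii{\delta}{\xi}(z)$ and $bz=b\,\phii{\delta}{\xi}(z)$, and both lie in $S_\xi$.

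The remaining term is $-z$, which lies in $S_\delta$. Since $az\in S_\xi$ and $-z\in S_\delta$, the sum of the first two terms lands in $S_{\xi\delta}=S_\xi$, and the addition rule gives
$$az-z=az+\phii{\delta}{\xi}(-z)=az-\phii{\delta}{\xi}(z),$$
because $\phii{\delta}{\xi}$ preserves $-$ as a homomorphism of square skew left braces. Adding $bz\in S_\xi$ keeps everything inside $S_\xi$, where the addition is the intrinsic addition of $S_\xi$. Writing $t=\phii{\delta}{\xi}(z)$, we therefore obtain $(a+b)t=at-t+bt$ for all $a,b\in S_\xi$, which is exactly $t\in\mathcal{D}_r(S_\xi)$ in the sense of the definition applied to the dual weak left $\star$-brace $S_\xi$.

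The only point needing care is the associativity and bracketing of $az-z+bz$ across components. I plan to handle it by using associativity of $+$ in $S$ and applying the addition rule to $(az-z)$ first, then to $(az-z)+bz$. Each partial sum has index $\xi$, so the structure maps involved are all $\phii{\xi}{\xi}=\mathrm{id}$, except the single map $\phii{\delta}{\xi}$ applied to $-z$. No further obstacle is expected. If a characterization in terms of $G$ is wanted, Lemma \ref{shou13} can be invoked afterwards, but it is not needed.
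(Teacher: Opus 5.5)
Your proof is correct and is the paper's argument: take $p,q\in S_\xi$, apply the distributor identity for $z$ in $S$, and use the strong-semilattice operations (with $\xi\delta=\xi$, $\phi_{\xi,\xi}=\mathrm{id}$, and $\phi_{\delta,\xi}$ preserving $-$) to replace $z$ by $\phi_{\delta,\xi}(z)$ in every term. The only difference is that you spell out the componentwise bookkeeping which the paper compresses into ``as $z\in\mathcal{D}_r(S)$ and $\xi\leq\delta$''.
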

\begin{proof}
Let $p,q\in S_\xi$. Then we have $$(p+q)\widetilde{z}=(p+q)z=pz-z+qz=p\widetilde{z}-\widetilde{z}+q\widetilde{z}$$ as $z\in  \mathcal{D}_r(S)$ and $\xi\leq \delta$. This shows that $\widetilde{z}\in  \mathcal{D}_r(S_\xi)$.
\end{proof}

\begin{lemma}\label{shou14} Let $(S,+,\cdot,-,\ast)$ be a dual weak left $\star$-brace and $z\in S$. Then  $z\in \mathcal{D}_r(S)$ if and only if $(a-b+c)z=az-bz+cz$ for all $a,b,c\in S$.
\end{lemma}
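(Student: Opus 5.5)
The approach is to imitate the proof of Lemma \ref{ma} for skew left braces, replacing the identity element with suitable projections and using the identities $ex=x+e$ (Lemma \ref{liwei}), $xe=e+x$ (Lemma \ref{likang5}), $-a+a=aa^\ast$, $a-a=a^\ast a$ and $-e=e$ for $e\in P(S)$ (Lemma \ref{lw6}).

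\emph{Sufficiency.} Assume $(a-b+c)z=az-bz+cz$ for all $a,b,c\in S$, and let $a,b\in S$. Since $-(-a+a)=-a+a$ and $a-a+a=a$, we have $a+b=a-(-a+a)+b$. The hypothesis therefore gives
$$(a+b)z=az-(-a+a)z+bz=az-aa^\ast z+bz.$$
By Lemma \ref{liwei}, $aa^\ast z=z+aa^\ast$, so $-aa^\ast z=-aa^\ast-z=aa^\ast-z$. Applying Lemma \ref{liwei} again with the projection $aa^\ast$ and the element $az$ gives $az+aa^\ast=aa^\ast(az)=az$. Hence $az-aa^\ast z=az+aa^\ast-z=az-z$, and so $(a+b)z=az-z+bz$. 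This shows $z\in\mathcal{D}_r(S)$.

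\emph{Necessity.} Assume $z\in\mathcal{D}_r(S)$. Applying the distributor identity twice yields
$$(a-b+c)z=az-z+(-b)z-z+cz.$$
So it suffices to prove $-z+(-b)z-z=-bz$ for all $b\in S$. Two instances of the distributor identity give the information needed:
\begin{itemize}
\item[(i)] $(b-b)z=bz-z+(-b)z$, with $(b-b)z=b^\ast bz=z+b^\ast b$;
\item[(ii)] $(-b+b)z=(-b)z-z+bz$, with $(-b+b)z=bb^\ast z=z+bb^\ast$.
\end{itemize}
Adding $-bz$ on the left of (i) gives
$$-bz+bz-z+(-b)z=-bz+z+b^\ast b,$$
and adding $-z$ on the right of this yields an expression for $-bz+bz-z+(-b)z-z$.

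The main obstacle is cancelling the idempotent factors such as $-bz+bz$ and $z-z$, since $(S,+)$ is not a group. For this I would use that $(S,+)$ is completely regular, orthodox and locally inverse (Lemma \ref{zicu9tiaoxingzhi}). In particular $(S,+)$ is a union of groups, so each element has a group inverse in its $\mathcal H$-class. The elements $bz$, $z$ and $(-b)z$ lie in a common $\mathcal D$-class because $bz\,\mathcal D\,(-b)z$. Hence, via Theorem \ref{main1}, the computation may be carried out inside a single square skew left brace $S_\xi$ after projecting $z$ by $\phi_{\delta,\xi}$ (Lemma \ref{shou15}). By Lemma \ref{square1}, $S_\xi\cong I\times G\times I$, where the identity is a coordinate computation together with Lemma \ref{ma} applied in $G$.

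For the first coordinate, I would check carefully how the index coordinates $(i,\cdot,j)$ behave under $+$, $-$ and multiplication by $z$, so that both sides of $-z+(-b)z-z=-bz$ carry the same outer indices.
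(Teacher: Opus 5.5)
Your sufficiency argument is correct, and it takes a different and more elementary route than the paper. The paper substitutes $zz^\ast$ for the middle variable, obtaining $az-z+cz=(a-zz^\ast+c)z$. It then needs Theorem~\ref{main1} and Lemma~\ref{square1} to see that $(a-zz^\ast+c)z=(a+c)z$. Your choice $-a+a=aa^\ast$ makes $a-(-a+a)+b=a+b$ automatic. The remaining step $az-aa^\ast z=az+aa^\ast-z=az-z$ uses only $ex=x+e$ from Lemma~\ref{liwei} and Lemma~\ref{lw6}. So your argument avoids the structure theorem entirely, and it never uses the dual axiom $x-x=x^\ast x$.

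The necessity direction has a genuine gap: the identity you reduce to, $-z+(-b)z-z=-(bz)$, is false in general, even for right distributors.

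\begin{itemize}
\item In a square skew left brace $I\times G\times I$, take $z=(u,t,v)$ and $b=(k,y,l)$. Then $-z+(-b)z-z=(v,\,-t+(-y)t-t,\,u)$, while $-(bz)=(v,\,-(yt),\,k)$.
\item The middle entries agree when $t\in\mathcal{D}_r(G)$, but the third entries are $u$ and $k$.
\item Concretely, take $G$ trivial and $I=\{p,q\}$. Every element is then a right distributor by Lemma~\ref{shou13}. Yet for $z=(p,1,p)$ and $b=(q,1,p)$ you get $(p,1,p)\neq(p,1,q)$.
\end{itemize}

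So the outer-index check you defer to the end cannot succeed. The mismatch is harmless only in context. A sum $X+Y+W$ in $I\times G\times I$ takes its first index from $W$ and its third from $X$, so the outer indices of your middle block are overwritten by $cz$ and $az$.

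The repair is not to isolate the middle block. Instead, compare the full expressions $(a-b+c)z$ and $az-bz+cz$ after projecting $a,b,c,z$ into $S_\xi$ with $\xi=\alpha\beta\gamma\delta$. There $\phi_{\delta,\xi}(z)=(u,t,v)$ with $t\in\mathcal{D}_r(G)$, by Lemmas~\ref{shou15} and \ref{shou13}. Both sides have the form $(m,\cdot,v)$, and Lemma~\ref{ma} matches the middle entries. This is exactly the paper's proof.

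Also, $z$, $bz$ and $(-b)z$ are not in a common $\mathcal{D}$-class in general, since $z\in S_\delta$ while $bz\in S_{\beta\delta}$. You do have to project $z$, as you eventually say. Your preliminary identities (i) and (ii) play no role once you pass to coordinates.
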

\begin{proof}By Theorem \ref{main1}, we assume that  $S=[Y; S_\alpha;\phii{\alpha}{\beta}]$ is a strong semilattice of a family of square skew left braces and $z\in S_\delta$ for some $\delta\in Y$.

Assume that the given condition holds. Let $\alpha,  \gamma\in Y$ and $a\in S_\alpha$ and $c\in S_\gamma$.
By Lemma \ref{square1}, we can let $S_{\alpha\gamma\delta}=I\times G\times I$, where $(G, +, \cdot)$ is a skew left brace with identity $1$ and $I$ is non-empty set. Denote
$$\overline{a}=\phii{\alpha}{\alpha\gamma\delta}(a)=(i,x,j), \overline{c}=\phii{\gamma}{\alpha\gamma\delta}(c)=(m,w,n), \overline{z}=\phii{\delta}{\alpha\gamma\delta}(z)=(u,t,v).$$  By the given axiom, we have $$az-z+cz=az-zz^\ast z+cz=(a-zz^\ast+c)z=(\overline{a}-\overline{z}~\overline{z}^\ast+\overline{c})\overline{z}$$$$=(m, (x+w)t,v)=(\overline{a}+\overline{c})\overline{z}=(a+c)z.$$
Thus $z\in \mathcal{D}_r(S)$.

Conversely, assume that  $z\in \mathcal{D}_r(S)$.  Let $\alpha, \beta, \gamma\in Y$ and $a\in S_\alpha, b\in S_\beta, c\in S_\gamma$.
By Lemma \ref{square1} again, we  let $\xi=\alpha\beta\gamma\delta$ and $S_{\xi}=I\times G\times I$, where $(G, +, \cdot)$ is a skew left brace with identity $1$ and $I$ is non-empty set. Denote $\widetilde{z}=\phii{\delta}{\xi}(z)=(u,t,v).$  By Lemma \ref{shou15}, $\widetilde{z}\in \mathcal{D}_r(S_\xi)$.  By Lemma \ref{shou13}, $t\in \mathcal{D}_r(G)$. Let
$$\widetilde{a}=\phii{\alpha}{\xi}(a)=(i,x,j), \widetilde{b}=\phii{\beta}{\xi}(b)=(k,y,l), \widetilde{c}=\phii{\gamma}{\xi}(c)=(m,w,n).$$
By Lemma \ref{ma}, we have $(x-y+w)t=xt-yt+wt$ and so
$$(a-b+c)z=(\widetilde{a}-\widetilde{b}+\widetilde{c})\widetilde{z}=(m, (x-y+w)t, v)$$$$=(m, xt-yt+wt, v)=\widetilde{a}~\widetilde{z}-\widetilde{b}\widetilde{z}+\widetilde{c}~\widetilde{z}=az-bz+cz.$$
This gives the condition stated in the lemma.
\end{proof}

\begin{prop}Let $(S,+,\cdot,-,\ast)$ be a dual weak left $\star$-brace. Then $\mathcal{D}_r(S)$ is a regular $\star$-subsemigroup of $(S, \cdot, \ast)$ such that $P(S)\subseteq \mathcal{D}_r(S)$.
\end{prop}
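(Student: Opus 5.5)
We need three facts: (i) $P(S)\subseteq\mathcal{D}_r(S)$, (ii) $\mathcal{D}_r(S)$ is closed under multiplication, and (iii) it is closed under $\ast$. Then $\mathcal{D}_r(S)$ is a regular $\star$-subsemigroup of $(S,\cdot,\ast)$, since the axioms (\ref{shishi1.2}) are inherited from $S$.

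\textbf{Projections.} Let $e\in P(S)$ and $a,b\in S$. By Lemma \ref{likang5}, $xe=e+x$ for all $x$, and $-e=e$ by Lemma \ref{lw6}. Hence
$$ae-e+be=e+a+e+e+b=e+a+e+b,$$
which equals $e+a+b=(a+b)e$ by Lemma \ref{liwei} ($e+x+y=e+x+e+y$). So $e\in\mathcal{D}_r(S)$. This step is a direct computation.

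\textbf{Products.} Let $z,w\in\mathcal{D}_r(S)$. We have
$$(a+b)zw=(az-z+bz)w.$$
Since $w\in\mathcal{D}_r(S)$, Lemma \ref{shou14} gives $(p-q+r)w=pw-qw+rw$. Taking $p=az$, $q=z$, $r=bz$, this becomes $azw-zw+bzw$. Hence $zw\in\mathcal{D}_r(S)$, and closure under products is immediate.

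\textbf{Closure under $\ast$ (main obstacle).} I will use the structure theory. By Theorem \ref{main1}, write $S=[Y;S_\alpha;\phii{\alpha}{\beta}]$ and let $z\in S_\delta\cap\mathcal{D}_r(S)$. Take $a\in S_\alpha$ and $b\in S_\beta$, and put $\xi=\alpha\beta\delta$ with $S_\xi=I\times G\times I$ as in Lemma \ref{square1}. Set $\widetilde z=\phii{\delta}{\xi}(z)=(u,t,v)$.

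By Lemmas \ref{shou15} and \ref{shou13}, $t\in\mathcal{D}_r(G)$, so $t^{-1}\in\mathcal{D}_r(G)$ by Lemma \ref{zhuyin1}(1). Since $\phii{\delta}{\xi}$ preserves $\ast$, we get $\phii{\delta}{\xi}(z^\ast)=\widetilde z^{\,\ast}=(v,t^{-1},u)$. Both $(a+b)z^\ast$ and $az^\ast-z^\ast+bz^\ast$ lie in $S_\xi$, because each term is computed after mapping into $S_\xi$. Writing $\phii{\alpha}{\xi}(a)=(i,x,j)$ and $\phii{\beta}{\xi}(b)=(k,y,l)$, the componentwise formulas of Lemma \ref{square1} give
$$(a+b)z^\ast=(k,(x+y)t^{-1},u),\qquad az^\ast-z^\ast+bz^\ast=(k,\,xt^{-1}-t^{-1}+yt^{-1},\,u).$$
The middle components agree because $t^{-1}\in\mathcal{D}_r(G)$.

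The delicate points are the index bookkeeping in the triples and the justification that the $\phii{}{}$ maps commute with all the operations. Both follow from the definition of the strong semilattice and from each $\phii{\alpha}{\beta}$ being a square skew left brace homomorphism.
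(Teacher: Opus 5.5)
Your proof is correct and follows the paper's argument essentially step for step. You use the same direct computation with $xe=e+x$, $-e=e$ and $e+x+y=e+x+e+y$ for projections, Lemma \ref{shou14} for closure under products, and the reduction to $S_\xi=I\times G\times I$ with $t^{-1}\in\mathcal{D}_r(G)$ from Lemma \ref{zhuyin1}(1) for closure under $\ast$; your componentwise bookkeeping is in fact slightly more careful than the printed proof, since the last coordinate of $(a+b)z^\ast$ is indeed $u$ (not $v$) and $\overline{b}$ should be $\phii{\beta}{\xi}(b)$.
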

\begin{proof}Let $e\in P(S)$ and $a,b\in S$. Then by Lemmas \ref{lw6}, \ref{liwei} and \ref{likang5}, we have $$(a+b)e=e+a+b=e+a+e+b=e+a-e+e+b=ae-e+be,$$ which shows $e\in \mathcal{D}_r(S)$. So  $P(S)\subseteq \mathcal{D}_r(S)$.
Now let $a,b\in S$ and $z_1, z_2\in \mathcal{D}_r(S)$. Then $$(a+b)(z_1z_2)=[(a+b)z_1]z_2=(az_1-z_1+bz_1)z_2=az_1z_2-z_1z_2+bz_1z_2$$ by Lemma \ref{shou14}. This implies that $z_1 z_2\in \mathcal{D}_r(S)$.

Let $a,b\in S$ and $z\in \mathcal{D}_r(S)$. By Theorem \ref{main1}, we assume that  $S=[Y; S_\alpha;\phii{\alpha}{\beta}]$ is a strong semilattice of a family of square skew left braces and $z\in S_\delta$ for some $\delta\in Y$. Let $\alpha, \beta\in Y$ and $a\in S_\alpha, b\in S_\beta$. Denote $\xi=\alpha\beta\delta$, $S_\xi=I\times G\times I$ by Lemma \ref{square1} and $$\overline{a}=\phii{\alpha}{\xi}(a)=(i,x,j), \overline{b}=\phii{\gamma}{\xi}(b)=(k,y,l), \overline{z}=\phii{\delta}{\xi}(z)=(u,t,v).$$
By Lemmas \ref{shou13} and \ref{shou15}, we have $t\in \mathcal{D}_r(G)$. By Lemma \ref{zhuyin1} (1), $\mathcal{D}_r(G)$ is a subgroup of $(G,\cdot)$, and so $t^{-1}\in \mathcal{D}_r(G)$. This implies that $(x+y)t^{-1}=xt^{-1}-t^{-1}+yt^{-1}$, and hence
$$(a+b)z^\ast=(\overline{a}+\overline{b})\overline{z}^\ast=(k,(x+y)t^{-1},v)$$$$=(k,xt^{-1}-t^{-1}+yt^{-1},v)=\overline{a}~\overline{z}^\ast -\overline{z}^\ast +\overline{b}\overline{z}^\ast=az^\ast-z^\ast+bz^\ast.$$
This gives that $z^\ast\in \mathcal{D}_r(S)$. Thus  the desired result is true.
\end{proof}

Let $(S,+,\cdot,-,\ast)$ be a dual weak left $\star$-brace and $a, b, z\in S$. Define two maps  $\sigma_a^z, \tau_b^z$ from $B$ to $B$  by setting
\begin{align*}
\sigma_a^{z}(b) =z^\ast z (-a z+a  b z)z^\ast z
\mbox{ and }
\tau_b^{z}(a) =(\sigma_a^{z}(b))^\ast ab=z^\ast z \left(-a  z+a  b  z\right)^{\ast} z^\ast z  a   b,
\end{align*}
Moreover, define a map ${r}^S_z$ from $S\times S$ to $S\times S$ by setting
$${r}^S_z(a,b)=\left({\sigma}^z_a(b), {\tau}^z_b(a)\right)=(z^\ast z (-a z+a  b z)z^\ast z, z^\ast z \left(-a  z+a  b  z\right)^{\ast} z^\ast z  a   b)$$
for all $a, b\in S$.

\begin{lemma}\label{shou8}Let $(S,+,\cdot,-,\ast)$  be a square skew left brace and $z\in S$. Then ${r}^S_z$ is a solution of the Yang-Baxter equation if and only if $z\in  \mathcal{D}_r(S)$.
\end{lemma}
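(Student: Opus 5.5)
By Lemma \ref{square1}, we may assume $S=I\times G\times I$ with the operations given there, where $(G,+,\cdot)$ is a skew left brace with identity $1$. Write $z=(u,t,v)$. The plan is to compute $r^S_z$ explicitly in coordinates and reduce it to the deformed map $\hat r^G_t$ on $G$. After that, Lemma \ref{shou13} and Lemma \ref{zhuyin1}(3) finish the proof.

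\textbf{Step 1: coordinates.} Let $a=(i,x,j)$ and $b=(k,y,l)$. We have $z^\ast z=(v,1,v)$, $az=(i,xt,v)$ and $abz=(i,xyt,v)$. Also $-az=(v,-xt,i)$, so $-az+abz=(i,-xt+xyt,v)$. Multiplying on both sides by $(v,1,v)$ gives
$$\sigma^z_a(b)=(v,\,-xt+xyt,\,v)=(v,\hat\sigma^t_x(y),v).$$
Next, $(\sigma^z_a(b))^\ast=(v,(\hat\sigma^t_x(y))^{-1},v)$. Multiplying by $ab=(i,xy,l)$ gives
$$\tau^z_b(a)=(v,(\hat\sigma^t_x(y))^{-1}xy,l)=(v,\hat\tau^t_y(x),l).$$
Hence
$$r^S_z\big((i,x,j),(k,y,l)\big)=\big((v,\hat\sigma^t_x(y),v),\,(v,\hat\tau^t_y(x),l)\big).$$
The main bookkeeping task is to track the $I$-coordinates correctly. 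The first component always has indices $(v,v)$. The second component has left index $v$, and its right index is $l$, the right index of the second input.

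\textbf{Step 2: reduce the Yang--Baxter equation.} We compare both sides of the braid relation on a triple $(a,b,c)$ with $c=(m,w,n)$. Applying $r^S_z$ in any position only does three things:
\begin{itemize}
\item it sets the first output's indices to $(v,v)$;
\item it sets the second output's left index to $v$;
\item it passes the right index of the second input through to the second output.
\end{itemize}
Consequently, after three applications on either side, the outputs have index pattern $(v,\cdot,v),(v,\cdot,v),(v,\cdot,n)$ in both cases. I will check this directly: the rightmost index $n$ propagates only through second components, and every other index is overwritten by $v$ after at most two moves.

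The $G$-coordinates, meanwhile, evolve exactly as under $\hat r^G_t$. Therefore $r^S_z$ satisfies the braid relation if and only if $\hat r^G_t$ does. This index verification is the step I expect to be the main obstacle. The key observation is that each position's final indices depend only on $v$ and $n$.

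\textbf{Step 3: conclude.} By Lemma \ref{zhuyin1}(3), $\hat r^G_t$ is a solution if and only if $t\in\mathcal D_r(G)$. By Lemma \ref{shou13}, this holds if and only if $z\in\mathcal D_r(S)$.

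For the converse direction (solution implies distributor), take $a,b,c$ with all indices equal to $v$. The $G$-components then give exactly the braid relation for $\hat r^G_t$ on arbitrary $x,y,w\in G$. So the implication from $r^S_z$ being a solution to $\hat r^G_t$ being a solution needs only this special case, and the equivalence holds in both directions.
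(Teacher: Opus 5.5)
Your proof is correct and follows essentially the same route as the paper: you realize $S$ as $I\times G\times I$, show that $r^S_z$ acts on the middle coordinates exactly as $\hat r^G_t$ with identical index patterns on both sides of the braid relation, and then conclude by Lemma~\ref{zhuyin1}(3) and Lemma~\ref{shou13}. There is one harmless slip: $-az+abz$ equals $(i,-xt+xyt,i)$, not $(i,-xt+xyt,v)$, but sandwiching by $z^\ast z=(v,1,v)$ overwrites both outer indices, so $\sigma^z_a(b)=(v,\hat\sigma^t_x(y),v)$ is still correct.
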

\begin{proof}By Lemma \ref{square1}, we assume that $S=I\times G\times I$, where $(G, +, \cdot)$ is a  skew left brace with identity $1$, and let $$a=(i,x,j), b=(k,y,l), c=(m,w,n), z=(u,t,v)\in S.$$ Then $z^\ast z=(v,1,v)$ and
$${\sigma}^z_a(b)=z^\ast z (-a z+a  b z)z^\ast z=(v,-xt+xyt,v)=(v,\hat{\sigma}^t_x(y), v),$$
$$\tau_b^{z}(a) =(\sigma_a^{z}(b))^\ast ab=(v, (\hat{\sigma}^t_x(y))^{-1} xy, l)=(v, \hat{\tau}_y^{t}(x),l).$$
This implies that
\begin{equation}\label{shou2}
\begin{aligned}
&\sigma^{z}_a\sigma^{z}_b(c)=\sigma^{z}_a(v,\hat{\sigma}^t_y(w), v)=(v,\hat{\sigma}^t_x\hat{\sigma}^t_y(w),v),\\
&  \sigma^{z}_{\sigma^{z}_a\left(b\right)}\sigma^{z}_{\tau^{z}_b\left(a\right)}\left(c\right)
=(v,\hat{\sigma}^{t}_{\hat{\sigma}^{t}_x\left(y\right)}\hat{\sigma}^{t}_{\hat{\tau}^{t}_y\left(x\right)}\left(w\right),v),
\end{aligned}
\end{equation}

$$\tau^{z}_c\tau^{z}_b\left(a\right)=\tau^{z}_c(v, \hat{\tau}^{t}_y(x), l)=(v, \hat{\tau}^{t}_w\hat{\tau}^{t}_y(x), n),\,\,\,\,
\tau^{z}_{\tau^{z}_c\left(b\right)}\tau^{z}_{\sigma^{z}_b\left(c\right)}\left(a\right)
=(v,\hat{\tau}^{t}_{\hat{\tau}^{t}_w\left(y\right)}\hat{\tau}^{w}_{\hat{\sigma}^{t}_y\left(w\right)}\left(x\right),n)$$
$$\sigma^{z}_{\tau^{z}_{\sigma^{z}_b\left(c\right)}\left(a\right)}\tau^{z}_c\left(b\right)
=(v,\hat{\sigma}^{t}_{\hat{\tau}^{t}_{\hat{\sigma}^{t}_y\left(w\right)}\left(x\right)}\hat{\tau}^{t}_w\left(y\right),v),\,\,\,\, \tau^{z}_{\sigma^{z}_{\tau^{z}_b\left(a\right)}\left(c\right)}\sigma^{z}_a\left(b\right)
=(v,\hat{\tau}^{t}_{\hat{\sigma}^{t}_{\hat{\tau}^{t}_y\left(x\right)}\left(w\right)}\hat{\sigma}^{t}_x\left(y\right),v).$$
By Lemmas \ref{zhuyin1}, \ref{shou13} and (\ref{jie1})-(\ref{jie3}), we obtain that
$$
r^S_z \mbox{ is a solution} \Longleftrightarrow r^G_t \mbox{ is a solution} \Longleftrightarrow  t\in  \mathcal{D}_r(G) \Longleftrightarrow  z\in  \mathcal{D}_r(S).
$$
Thus the desired result follows.
\end{proof}

\begin{theorem}\label{main2}Let $(S,+,\cdot,-,\ast)$  be a dual weak left $\star$-brace and $z\in S$. Then ${r}^S_z$ is a solution of the Yang-Baxter equation if and only if $z\in  \mathcal{D}_r(S)$.
\end{theorem}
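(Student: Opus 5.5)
We use Theorem \ref{main1} to write $S=[Y; S_\alpha;\phii{\alpha}{\beta}]$ with each $S_\alpha$ a square skew left brace, and let $z\in S_\delta$. The idea is to reduce every computation to a single component $S_\xi$ with $\xi\leq\delta$, where Lemma \ref{shou8} applies. First we check how $\sigma^z,\tau^z$ behave with respect to components. Since $z^\ast z\in S_\delta$ sits at both ends, $\sigma^z_a(b)$ lies in $S_{\alpha\beta\delta}$ for $a\in S_\alpha$, $b\in S_\beta$. Because every operation in a strong semilattice is computed after mapping into the meet component, and the $\phii{\alpha}{\beta}$ are homomorphisms of $+,\cdot,-,\ast$, we get
$$\sigma^z_a(b)=\sigma^{\overline z}_{\overline a}(\overline b),\qquad \tau^z_b(a)=\tau^{\overline z}_{\overline b}(\overline a),$$
where bars denote images in $S_{\alpha\beta\delta}$. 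Thus $r^S_z$ restricted to $S_\alpha\times S_\beta$ is $r^{S_\xi}_{\overline z}$ composed with the projections to $S_\xi$, with $\xi=\alpha\beta\delta$. For $a,b,c$ in $S_\alpha,S_\beta,S_\gamma$, every term in (\ref{jie1})--(\ref{jie3}) lies in $S_\xi$ with $\xi=\alpha\beta\gamma\delta$. Each side equals the corresponding expression for $r^{S_\xi}_{\phii{\delta}{\xi}(z)}$ evaluated at $(\phii{\alpha}{\xi}(a),\phii{\beta}{\xi}(b),\phii{\gamma}{\xi}(c))$. Here we use that $\phii{\alpha\beta\delta}{\xi}$ commutes with $\sigma,\tau$ by the homomorphism property and the composition rule (2).

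For sufficiency, suppose $z\in\mathcal{D}_r(S)$. By Lemma \ref{shou15}, $\phii{\delta}{\xi}(z)\in\mathcal{D}_r(S_\xi)$ for every $\xi\leq\delta$. Lemma \ref{shou8} then says $r^{S_\xi}_{\phii{\delta}{\xi}(z)}$ is a solution, so (\ref{jie1})--(\ref{jie3}) hold in $S_\xi$. By the reduction above they hold for $r^S_z$.

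For necessity, suppose $r^S_z$ is a solution. Taking $a,b,c\in S_\delta$, the reduction shows $r^{S_\delta}_z$ is a solution, because $S_\delta$ is closed under all the operations. Lemma \ref{shou8} then gives $z\in\mathcal{D}_r(S_\delta)$. Applying the same argument in $S_\xi$ for every $\xi\leq\delta$, with elements $a,b,c\in S_\xi$, gives $\phii{\delta}{\xi}(z)\in\mathcal{D}_r(S_\xi)$. To conclude $z\in\mathcal{D}_r(S)$, take $a\in S_\alpha$, $b\in S_\beta$ and set $\xi=\alpha\beta\delta$. Then $(a+b)z$ and $az-z+bz$ are computed in $S_\xi$ as $(\overline a+\overline b)\overline z$ and $\overline a\,\overline z-\overline z+\overline b\,\overline z$. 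These are equal because $\overline z\in\mathcal{D}_r(S_\xi)$.

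The main obstacle is making the reduction rigorous. We must verify that $\phii{\alpha\beta\delta}{\xi}(\sigma^z_a(b))=\sigma^{\overline z}_{\overline a}(\overline b)$ for the deeper component $\xi$, so that the nested terms such as $\sigma^z_{\sigma^z_a(b)}\sigma^z_{\tau^z_b(a)}(c)$ really collapse to the same expressions in $S_\xi$. This follows by induction on term depth: the strong-semilattice operations satisfy $u\circ v=\phi(u)\circ\phi(v)$ in any lower component, and the $\phi$'s are compatible with composition. I would state this as a short auxiliary claim before the two directions.
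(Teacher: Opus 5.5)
Your proof is correct. The sufficiency direction matches the paper's: reduce to $S_\xi$ with $\xi=\alpha\beta\gamma\delta$ via Lemma \ref{shou15}, the identity (\ref{shou6}) and Lemma \ref{shou8}.

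For necessity you take a different route from the paper.

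\textbf{The paper's route.} It fixes $a\in S_\alpha$, $b\in S_\beta$ and substitutes the special triple $(a,\,a^\ast a,\,a^\ast(az+b)z^\ast)$ into (\ref{jie1}). It computes both sides in $S_{\alpha\beta\delta}=I\times G\times I$ using Lemma \ref{zhuyin1} (4) and reads off $-xt+(x+y)t=-t+yt$. This is the distributor identity directly.

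\textbf{Your route.} You observe that for each $\xi\le\delta$ the set $S_\xi\times S_\xi$ is invariant under $r^S_z$, and that the restriction coincides with $r^{S_\xi}_{\phi_{\delta,\xi}(z)}$. So the restricted map satisfies the braid relation, and the ``only if'' half of Lemma \ref{shou8} gives $\phi_{\delta,\xi}(z)\in\mathcal{D}_r(S_\xi)$ for every $\xi\le\delta$. You then reassemble $z\in\mathcal{D}_r(S)$ by computing $(a+b)z$ and $az-z+bz$ in $S_{\alpha\beta\delta}$; this is in effect a converse of Lemma \ref{shou15}.

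\textbf{What each buys.} Your version is more modular. Both directions become ``reduce to a component and apply Lemma \ref{shou8}'', and no explicit witness triple is needed. The nested-term bookkeeping you flag as the main obstacle is only needed for sufficiency, where the paper uses it too. The paper's version is more informative: it shows that (\ref{jie1}) alone, on one special triple per pair $(a,b)$, already forces $z\in\mathcal{D}_r(S)$. Yours uses the full solution property on each $S_\xi$ and relies, through Lemma \ref{shou8}, on the black-box necessity part of Lemma \ref{zhuyin1} (3).
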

\begin{proof}
By Theorem \ref{main1}, we assume that  $S=[Y; S_\alpha;\phii{\alpha}{\beta}]$ is a strong semilattice of a family of square skew left braces and $z\in S_\delta$ for some $\delta\in Y$.

Assume that ${r}^S_z$ is a solution of the Yang-Baxter equation. By (Y1),
\begin{equation}\label{shou3}
\sigma^{z}_a\sigma^{z}_b(c)=\sigma^{z}_{\sigma^{z}_a\left(b\right)}\sigma^{z}_{\tau^{z}_b\left(a\right)}\left(c\right) \mbox{ for all } a,b\in S.
\end{equation}
Let $\alpha,\beta\in Y$ and $a\in S_\alpha, b\in S_\beta$. By Lemma \ref{square1}, we can let $S_{\alpha\beta\delta}=I\times G\times I$, where $(G, +, \cdot)$ is a skew left brace with identity $1$ and $I$ is non-empty set. Denote
$$\overline{a}=\phii{\alpha}{\alpha\beta\delta}(a)=(i,x,j), \overline{b}=\phii{\beta}{\alpha\beta\delta}(b)=(k,y,l), \overline{z}=\phii{\delta}{\alpha\beta\delta}(z)=(u,t,v).$$
Set $c=a^\ast (az+b)z^\ast$ and $\overline{c}=\overline{a}^\ast (\overline{a}~\overline{z}+\overline{b})\overline{z}^\ast$. Then we obtain
$\overline{c}=(j, x^{-1}(xt+y)t^{-1}, u)$ and $\overline{a}^\ast\overline{a}=(j,1,j)$. In view of (\ref{shou2}) and Lemma \ref{zhuyin1} (4), we have
\begin{equation}\label{shou4} \sigma^{\overline{z}}_{\overline{a}}\sigma^{\overline{z}}_{\overline{a}^\ast\overline{a}}(\overline{c})=(v,\hat{\sigma}^t_x\hat{\sigma}^t_1(x^{-1}(xt+y)t^{-1}),v)
=(v,-xt+(x+y)t,v),
\end{equation}
\begin{equation}\label{shou5}
\begin{aligned}
&\sigma^{\overline{z}}_{\sigma^{\overline{z}}_{\overline{a}}\left(\overline{a}^\ast\overline{a}\right)}
\sigma^{\overline{z}}_{\tau^{\overline{z}}_{\overline{a}^\ast\overline{a}}\left(\overline{a}\right)}\left(\overline{c}\right)
=(v,\hat{\sigma}^{t}_{\hat{\sigma}^{t}_x\left(1\right)}\hat{\sigma}^{t}_{\hat{\tau}^{t}_1\left(x\right)}\left(\overline{c}\right),v)\\
&=(v,\hat{\sigma}^{t}_{1}\hat{\sigma}^{t}_{x}\left(x^{-1}(xt+y)t^{-1}\right),v)=(v,-t+yt,v).
\end{aligned}
\end{equation}
Observe that
\begin{equation}\label{shou6}
\sigma^z_a(b)=z^\ast z(-az+abz)z^\ast z=\overline{z}^\ast \overline{z}(-\overline{a}~\overline{z}+\overline{a}~\overline{b}~\overline{z})\overline{z}^\ast \overline{z}=\sigma^{\overline{z}}_{\overline{a}}(\overline{b}),\,\, \tau^z_b(a)=\tau^{\overline{z}}_{\overline{b}}(\overline{a}).
\end{equation}
 This together with (\ref{shou3}), (\ref{shou4}) and (\ref{shou5}) implies  that
$-xt+(x+y)t=-t+yt$ in the skew left brace $G$. So $(x+y)t=xt-t+yt$. Observe that $$(a+b)z=(\overline{a}+\overline{b})\overline{z}=(k, (x+y)t, v)$$ and $$az-z+bz=\overline{a}~\overline{z}-\overline{z}+\overline{b}~\overline{z}=(k, xt-t+yt, v),$$ it follows that $(a+b)z=az-z+bz$. Thus $z\in  \mathcal{D}_r(S)$.

Conversely, let $z\in  \mathcal{D}_r(S)$ and $ z\in S_\delta$. Let $\alpha,\beta, \gamma\in Y$ and $a\in S_\alpha, b\in S_\beta, c\in S_\gamma$.
Denote $\xi=\alpha\beta\gamma\delta$. Then $\xi\leq \delta$. Let $\overline{z}=\phi_{\delta, \xi}(z)$. Then $\overline{z}\in S_\xi$.
By Lemma \ref{shou15}, we have  $\overline{z}\in  \mathcal{D}_r(S_\xi)$.  Denote $\overline{a}=\phi_{\alpha, \xi}(a), \overline{b}=\phi_{\beta, \xi}(b),  \overline{c}=\phi_{\gamma, \xi}(c)$.
Then $\overline{a}, \overline{b}, \overline{c}, \overline{z}\in S_\xi$. Since $\overline{z}\in  \mathcal{D}_r(S_\xi)$ and $S_\xi$ is a square skew left brace, it follows that $r^{S_\xi}_{\overline{z}}$ is a solution of the Yang-Baxter equation by Lemma \ref{shou8}.  By (\ref{jie1}) (for $r^{S_\xi}_{\overline{z}}$) and (\ref{shou6}), we obtain $$\sigma^{z}_a\sigma^{z}_b(c)=\sigma^{z}_{\overline{a}}\sigma^{\overline{z}}_{\overline{b}}(\overline{c})
=\sigma^{\overline{z}}_{\sigma^{\overline{z}}_{\overline{a}}\left(\overline{b}\right)}
\sigma^{\overline{z}}_{\tau^{\overline{z}}_{\overline{b}}\left(\overline{a}\right)}\left(\overline{a}\right)
=\sigma^{z}_{\sigma^{z}_a\left(b\right)}\sigma^{z}_{\tau^{z}_b\left(a\right)}\left(c\right).$$
This shows that (\ref{jie1}) holds for $r^S_{z}$.  Similarly, we can show that both (\ref{jie2}) and (\ref{jie3}) are true for $r^S_{z}$. Thus $r^S_{z}$ is a solution of the Yang-Baxter equation.
\end{proof}

\begin{theorem}\label{theor_rropr}
Let $(S,+, \cdot, -, \ast)$ be a dual weak left $\star$-brace and $z \in \mathcal{D}_r(S)$. Define  a map $\check{r}^S_{z}$ from $S\times S$ to $S\times S$  by setting
\begin{align*}
     \check{r}^S_{z}(a,b)=\left((ab-az+z)zz^\ast, \ \left((ab-az+z)zz^\ast\right)^\ast ab \right) \mbox{ for all } a,b \in S.
\end{align*}
For brevity, we write $r^S_z$ and $\check{r}^S_{z}$ by $r_z$ and $\check{r}_{z}$  respectively. Then $\check{r}^S_{z}$ is a solution of the Yang-Baxter equation.  Moreover, we have
\begin{align}\label{shou7}
      r_{z}  \check{r}_{z^\ast}  r_{z} = r_{z}, \qquad
      \check{r}_{z^\ast}  r_{z}  \check{r}_{z^\ast}= \check{r}_{z^\ast}, \qquad  r_{z}\check{r}_{z^\ast} = \check{r}_{z^\ast}r_{z}.
\end{align}
\end{theorem}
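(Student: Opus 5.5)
The strategy is the same reduction used in the proof of Theorem \ref{main2}: push everything into a single $\mathcal D$-class, realise that class as $I\times G\times I$ by Lemma \ref{square1}, and quote Lemma \ref{zhuyin1}(2) there.

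\textbf{Setup.} By Theorem \ref{main1}, write $S=[Y;S_\alpha;\phii{\alpha}{\beta}]$ with $z\in S_\delta$. Since $\mathcal D_r(S)$ is closed under $\ast$ (Proposition above), $z^\ast\in\mathcal D_r(S)$ as well. For $a\in S_\alpha$, $b\in S_\beta$ put $\xi=\alpha\beta\delta$ and write $\overline{a},\overline{b},\overline{z}$ for the images in $S_\xi$. Every expression in $\check r_z(a,b)$ is built from $a,b,z$ by $+,\cdot,-,\ast$, and the $\phii{\alpha}{\beta}$ are homomorphisms. Hence, exactly as in (\ref{shou6}),
\[
\check r^S_z(a,b)=\check r^{S_\xi}_{\overline z}(\overline a,\overline b).
\]
The same holds for $r_z$. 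By Lemma \ref{shou15}, $\overline z\in\mathcal D_r(S_\xi)$.

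\textbf{Computation in a square skew left brace.} Write $S_\xi=I\times G\times I$, and by Lemma \ref{shou13} take $\overline z=(u,t,v)$ with $t\in\mathcal D_r(G)$. For $\overline a=(i,x,j)$ and $\overline b=(k,y,l)$ we have $zz^\ast=(u,1,u)$. A direct computation gives
\[
(ab-az+z)zz^\ast=(u,\;xy-xt+t,\;u)=(u,\check\sigma^t_x(y),u),
\]
and the second component of $\check r_z(a,b)$ is $(u,\check\tau^t_y(x),l)$.

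\textbf{YBE for $\check r_z$.} Evaluating both sides of (\ref{jie1})--(\ref{jie3}) as in Lemma \ref{shou8}, the middle coordinates reduce to the corresponding identities for $\check r^G_t$. The outer coordinates agree on both sides: each is either $u$ or the last index of the final argument. By Lemma \ref{zhuyin1}(2), applied to $t^{-1}\in\mathcal D_r(G)$, the map $\check r^G_t=\check r^G_{(t^{-1})^{-1}}$ is a solution. Hence $\check r^{S_\xi}_{\overline z}$ satisfies (\ref{jie1})--(\ref{jie3}) on triples from $S_\xi$. To get them for arbitrary $a,b,c\in S$, take $\xi=\alpha\beta\gamma\delta$ and use the compatibility above, following the converse part of Theorem \ref{main2}. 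So $\check r^S_z$ is a solution.

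\textbf{The identities (\ref{shou7}).} Take $(a,b)\in S_\alpha\times S_\beta$ and $\xi=\alpha\beta\delta$. Both $r_z$ and $\check r_{z^\ast}$ send $(a,b)$ into $S_\xi\times S_\xi$. They commute with the projections $\phii{\xi}{\xi'}$ and depend only on $(\overline a,\overline b)$. Each composite in (\ref{shou7}) therefore factors through $S_\xi\times S_\xi$, and it suffices to verify the identities in $I\times G\times I$ with $\overline z=(u,t,v)$, $\overline z^\ast=(v,t^{-1},u)$.

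There, $r_z(a,b)=((v,\hat\sigma^t_x(y),v),(v,\hat\tau^t_y(x),l))$ and $\check r_{z^\ast}(a,b)=((v,\check\sigma^{t^{-1}}_x(y),v),(v,\check\tau^{t^{-1}}_y(x),l))$. The index pattern $(i,j),(k,l)\mapsto (v,v),(v,l)$ is idempotent for both maps, so in all three identities the indices match on both sides. On $G$, Lemma \ref{zhuyin1}(2) says $\hat r^G_t$ and $\check r^G_{t^{-1}}$ are mutually inverse bijections. Thus $rr^{-1}r=r$, $r^{-1}rr^{-1}=r^{-1}$ and $rr^{-1}=r^{-1}r=\mathrm{id}$, which are exactly the middle-coordinate versions of (\ref{shou7}).

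\textbf{Main obstacle.} The step needing care is the first, nonrestricted application in each composite. There the original indices $(i,j),(k,l)$ and the different components $S_\alpha,S_\beta$ are replaced by the indices of the output, which lies in $S_\xi$. I must check that the output's middle coordinates are exactly $\hat r^G_t(x,y)$, respectively $\check r^G_{t^{-1}}(x,y)$, computed from the projections, and that the index $l$ survives consistently. This amounts to re-running the computation of Lemma \ref{shou8} for $\check r$ and $z^\ast$, which I expect to be routine but index-heavy.
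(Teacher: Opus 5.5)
Your proposal is correct and follows essentially the same route as the paper. You reduce, via Theorem \ref{main1} and the compatibility of type (\ref{shou6}), to a single square skew left brace $I\times G\times I$ and then invoke Lemma \ref{zhuyin1}(2). Your observation that $\check r^G_t=\check r^G_{(t^{-1})^{-1}}$ is a solution, and your index-idempotence plus $rr^{-1}r=r$ argument for (\ref{shou7}), simply package more cleanly the explicit coordinate computations the paper carries out.

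The step you flag as the main obstacle is already covered by the identities $r_z(a,b)=r^{S_\xi}_{\overline z}(\overline a,\overline b)$ and $\check r_{z^\ast}(a,b)=\check r^{S_\xi}_{\overline{z}^\ast}(\overline a,\overline b)$ from your setup, so no further index bookkeeping is needed.
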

\begin{proof}
By Lemma \ref{zhuyin1} (2) and using the similar method to the proof of corresponding parts in Lemma \ref{shou8} and Theorem \ref{main2}, we can show that  $\check{r}_{z}$ is indeed a solution of the Yang-Baxter equation. We omit the details.

Now, we  prove (\ref{shou7}).   In view of the method used in the proof of Lemma \ref{shou8} and Theorem \ref{main2},  we can assume that $S=I\times G\times I$, where $(G, +, \cdot)$ is a  skew left brace with identity $1$. Let $a=(i,x,j), b=(k,y,l), z=(u,t,v)\in S$. Then $z^\ast=(v,t^{-1}, u)$ and
\begin{align}\label{shou11}
(ab-az^\ast+z^\ast)z^\ast z=(v, xy-xt^{-1}+t^{-1}, l)(v,1,v)=(v,\check{\sigma}_x^{t^{-1}}(y),v),
\end{align}
\begin{equation}\label{shou12}
\begin{aligned}
&((ab-az^\ast+z^\ast)z^\ast z)^{\ast} ab=(v,(\check{\sigma}_x^{t^{-1}}(y))^{-1},v)(i,xy,l)\\
&=(v, (\check{\sigma}_x^{t^{-1}}(y))^{-1} xy,l)=(v, \check{\tau}_y^{t^{-1}}(x),l)=(v, (xy-xt^{-1}+t^{-1})^{-1}, l).
\end{aligned}
\end{equation}
By Lemma \ref{zhuyin1} (2), $\check{r}^G_{t^{-1}}$ is the inverse of $r^G_t$, this implies that
$$z^\ast z(-(v,\check{\sigma}_x^{t^{-1}}(y),v)z+(v,\check{\sigma}_x^{t^{-1}}(y),v)(v, \check{\tau}_y^{t^{-1}}(x),l)z)z^\ast z$$
$$=(v,-\check{\sigma}_x^{t^{-1}}(y)~t+\check{\sigma}_x^{t^{-1}}(y)~\check{\tau}_y^{t^{-1}}(x)~t ,v)=(v,\hat{\sigma}^t_{\check{\sigma}_x^{t^{-1}}(y)}(\check{\tau}_y^{t^{-1}}(x)),v)=(v,x,v),$$
$$(v,x,v)^\ast (v,\check{\sigma}_x^{t^{-1}}(y),v), (v, \check{\tau}_y^{t^{-1}}(x),l)=(v,x^{-1},v)(v, xy, l)=(v, y, l).$$
Therefore
$$ r_{z} \check{r}_{z^\ast}(a, b)=r_{z}((v,\check{\sigma}_x^{t^{-1}}(y),v), (v, \check{\tau}_y^{t^{-1}}(x),l))=((v,x,v),(v,y,l)).$$
On the other hand, we have
\begin{equation}\label{shou9}
z^\ast z(-az+abz)z^\ast z=(v, -xt+xyt,v)=(v,\hat{\sigma}^t_x(y),v),
\end{equation}
\begin{equation}\label{shou10}
\begin{aligned}
&(z^\ast z(-az+abz)z^\ast z)^{\ast}ab=(v,\hat{\sigma}^t_x(y),v)^\ast (i, xy, l)\\
&=(v, (\hat{\sigma}^t_x(y))^{-1} xy, l)=(v,\hat{\tau}^t_y(x), l)=(v, (-xt+xyt)^{-1}xy, l).
\end{aligned}
\end{equation}
By Lemma \ref{zhuyin1} (2), $\check{r}^G_{t^{-1}}$ is the inverse of $r^G_t$, this implies that
$$[(v,\hat{\sigma}^t_x(y),v) (v,\hat{\tau}^t_y(x), l)-(v,\hat{\sigma}^t_x(y),v)z^\ast +z^\ast]z^\ast z$$
$$=(v, \hat{\sigma}^t_x(y) \hat{\tau}^t_y(x)-\hat{\sigma}^t_x(y) t^{-1}+t^{-1}, v)=(v,\check{\sigma}^{t^{-1}}_{\hat{\sigma}^t_x(y)}(\hat{\tau}^t_y(x)),v)=(v,x,v),$$
$$(v,x,v)^\ast (v,\hat{\sigma}^t_x(y),v) (v,\hat{\tau}^t_y(x), l)=(v,x^{-1},v)(v, xy, l)=(v, y, l).$$
Therefore
$$\check{r}_{z^\ast} r_{z}(a, b)=\check{r}_{z^\ast}((v,\hat{\sigma}_x^{t}(y),v), (v, \hat{\tau}_y^{t}(x),l))=((v,x,v),(v,y,l)).$$
Thus $r_{z}\check{r}_{z^\ast} = \check{r}_{z^\ast}r_{z}$.
Observe that
$$z^\ast z(-(u,x,v)z+(v,x,v)(v,y,l)z)z^\ast z=(v, -xt+xyt, v)$$ and $$(v, -xt+xyt, v)^\ast (v,x,v)(v,y,l)=(v,(-xt+xyt)^{-1}xy,l),$$
it follows that $$r_z\check{r}_{z^\ast}r_{z}(a,b)=r_z((v,x,v),(v,y,l))$$$$=((v, -xt+xyt, v), (v,(-xt+xyt)^{-1}xy,l))=r_z(x,y)$$
by (\ref{shou9}) and (\ref{shou10}).  This gives that $r_{z}  \check{r}_{z^\ast}  r_{z} = r_{z}$. Observe that
$$((v,x,v)(v,y,l)-(v,x,v)z^\ast+z^\ast)z^\ast z=(v, xy-xt^{-1}+t^{-1}, v)$$ and $$(v, xy-xt^{-1}+t^{-1}, v)^\ast (v,x,v)(v,y,l)=(v,(xy-xt^{-1}+t^{-1})^{-1}xy,l),$$
it follows that
$$\check{r}_{z^\ast}r_{z}\check{r}_{z^\ast}(a,b)=r_z((v,x,v),(v,y,l))$$$$=((v, xy-xt^{-1}+t^{-1}, v), (v,(xy-xt^{-1}+t^{-1})^{-1}xy,l))=\check{r}_{z^\ast}(a,b)$$ by (\ref{shou11}) and (\ref{shou12}). This gives that $\check{r}_{z^\ast}  r_{z}  \check{r}_{z^\ast}= \check{r}_{z^\ast}$.
\end{proof}
\begin{remark}
Let $(S,+, \cdot, -, \ast)$ be a dual weak left $\star$-brace and $z \in \mathcal{D}_r(S)$. Define two maps $\overline{r}_z$ and $\overline{\check{r}}_{z}$ from $S\times S$ to $S\times S$ by setting
$$\overline{r}_z(a,b)=(-a z+a  b z, \left(-a  z+a  b  z\right)^{\ast}  a   b) \mbox{ for all } a,b \in S,$$
$$\overline{\check{r}}_{z}(a,b)=\left(ab-az+z, \ \left(ab-az+z\right)^\ast ab \right) \mbox{ for all } a,b \in S.$$
Then one can prove that $\overline{r}_z$ and $\overline{\check{r}}_{z}$ are still solutions of Yang-Baxter equation, but  $\overline{r}_{z}  \overline{\check{r}}_{z^\ast}  \overline{r}_{z} \not= \overline{r}_{z}$ and $\overline{r}_{z}\overline{\check{r}}_{z^\ast} \not= \overline{\check{r}}_{z\ast}\overline{r}_{z}$ in general. We also observe that $\overline{r}_{z}   {\check{r}}_{z^\ast}  \overline{r}_{z} \not= \overline{r}_{z}$,  ${r}_{z}\overline{\check{r}}_{z^\ast} \not= {\check{r}}_{z\ast}\overline{r}_{z}$ and $\overline{r}_{z} {\check{r}}_{z^\ast} =  {\check{r}}_{z^\ast}\overline{r}_{z}$ in general.
\end{remark}
 By using Lemma \ref{zhuyin1} (3) and the method used in the proof of Lemma \ref{shou8}, Theorems \ref{main2}, \ref{theor_rropr} and \cite[Proposition 3.11]{Mazzotta-Rybolowicz-Stefanelli}, we can prove the following result, we omit the details.
\begin{prop}
Let $(S,+,\cdot,-,\ast)$  be a dual weak left $\star$-brace and  $z\in  \mathcal{D}_r(S)$. Then
\begin{itemize}
\item[(1)]  For all $a\in S$, we have
$$\sigma_a^{z}\sigma_{a^\ast}^{z^\ast}\sigma_a^{z}
    = \sigma_a^{z},~
     \sigma_{a^\ast}^{z^\ast}\sigma_a^{z}\sigma_{a^\ast}^{z^\ast}
    = \sigma_{a^\ast}^{z^\ast}, ~
    \tau^{z}_{a^\ast}\tau_{a}^{z}\tau^{z}_{a^\ast}
    = \tau^{z}_{a^\ast}, ~\tau^{z}_{a^\ast}\tau_{a}^{z}\tau^{z}_{a^\ast}
    = \tau^{z}_{a^\ast},\tau^{z}_a \tau_{a^\ast}^{z}
    =\tau_{a^\ast}^{z}\tau_a^z,$$ but
 $\sigma_{a}^{z}\sigma_{a^\ast}^{z^\ast}    \neq\sigma_{a^\ast}^{z^\ast}\sigma_{a}^{z}$ in general.
\item[(2)] ${\tau}^z: (S,\cdot)\rightarrow {\rm{Map}}(S), a\mapsto  {\tau}^z_a$ is an anti-homomorphism.
\item[(3)] ${\sigma}^z: (S,\cdot)\rightarrow {\rm{Map}}(S), a\mapsto  {\sigma}^z_a$ is a homomorphism if and only if $cz=z+c$ for all $c\in G$.
\end{itemize}
\end{prop}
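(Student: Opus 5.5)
We reduce to a single square skew left brace and then to the skew left brace $G$, following the pattern of Lemma \ref{shou8} and Theorem \ref{main2}. By Theorem \ref{main1}, write $S=[Y;S_\alpha;\phii{\alpha}{\beta}]$ with $z\in S_\delta$. For $a\in S_\alpha$, $b\in S_\beta$, put $\xi=\alpha\beta\delta$ and let $\overline{a},\overline{b},\overline{z}$ be the images in $S_\xi$. Relation (\ref{shou6}) gives $\sigma^z_a(b)=\sigma^{\overline z}_{\overline a}(\overline b)$ and $\tau^z_b(a)=\tau^{\overline z}_{\overline b}(\overline a)$. Since $\phii{\delta}{\xi}$ preserves $\ast$, the same holds with $z^\ast$ and $a^\ast$, because $\overline{a^\ast}=\overline{a}^\ast$. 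By Lemma \ref{shou15}, $\overline z\in\mathcal D_r(S_\xi)$.

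One subtlety: $\sigma^z_a$ acts on all of $S$, so its argument $c$ may lie in a lower component. We therefore take $\xi$ to be the product of the indices of all elements involved in a given identity, which makes every value computable in one $S_\xi$. We also check that every output of $\sigma^z_a$ lies below $\delta$, so iterated maps stay in the correct component.

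Next, by Lemma \ref{square1}, take $S_\xi=I\times G\times I$ and $\overline z=(u,t,v)$, so that $t\in\mathcal D_r(G)$ by Lemma \ref{shou13}. As in the proof of Lemma \ref{shou8}, $\sigma^{\overline z}_{(i,x,j)}(k,y,l)=(v,\hat\sigma^t_x(y),v)$ and $\tau^{\overline z}_{(k,y,l)}(i,x,j)=(v,\hat\tau^t_y(x),l)$. Since $\overline z^\ast=(v,t^{-1},u)$, we also have $\overline{z}^\ast\overline z^{\ast\ast}=(u,1,u)$, so that $\sigma^{\overline z^\ast}_{(j,x^{-1},i)}(k,y,l)=(u,\hat\sigma^{t^{-1}}_{x^{-1}}(y),u)$.

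Each identity in (1) then reduces to its $G$-counterpart on the middle coordinate, with the outer indices tracked separately. The $G$-identities are these. For $\sigma$, they follow from $(\hat\sigma^t_x)^{-1}=\hat\sigma^{t^{-1}}_{x^{-1}}$ in Lemma \ref{zhuyin1}(2). For $\tau$, they follow from $(\hat\tau^t_x)^{-1}=\hat\tau^t_{x^{-1}}$, which comes from the anti-homomorphism property in Lemma \ref{zhuyin1}(3): $\hat\tau^t_{x^{-1}}\hat\tau^t_x=\hat\tau^t_{1}$, and $\hat\tau^t_1=\mathrm{id}$ because $\hat\sigma^t_x(1)=-xt+xt=1$. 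Composing bijections with their inverses in $G$ gives the middle coordinates, and it remains to check that the outer indices agree. For instance, $\sigma^z_a\sigma^{z^\ast}_{a^\ast}\sigma^z_a$ has outer indices $(v,v)$, matching $\sigma^z_a$. This is exactly where commutativity can fail for $\sigma$: $\sigma^z_a\sigma^{z^\ast}_{a^\ast}$ has outer indices $(v,v)$, whereas $\sigma^{z^\ast}_{a^\ast}\sigma^z_a$ has $(u,u)$. So a counterexample only requires $|I|\ge2$ and $u\ne v$, say $I=\{1,2\}$, $G$ trivial and $z=(1,1,2)$. For $\tau$, the first index is always $v$ and the last is preserved, so commutativity does hold. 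For the same reason, the two $\tau$-identities listed in (1) coincide.

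For (2), we compute $\tau^z_{bc}$ and $\tau^z_c\tau^z_b$ in $S_\xi$, where $\xi$ includes the indices of $a$, $b$ and $c$; both equal $(v,\hat\tau^t_{yw}(x),n)$ by Lemma \ref{zhuyin1}(3), with the outer indices matching. For (3), Lemma \ref{zhuyin1}(3) and \cite[Proposition 3.11]{Mazzotta-Rybolowicz-Stefanelli} show that the middle coordinates agree precisely when $wt=t+w$ in each relevant $G$, and the outer indices agree automatically. Translating back via Lemma \ref{square1} and the strong semilattice structure, this condition becomes $cz=z+c$ for all $c\in S$: we evaluate in $S_{\gamma\delta}$ and use that $\phii{\delta}{\gamma\delta}$ is a homomorphism.

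The main obstacle is the necessity direction of (3). There we must extract $wt=t+w$ in $G$ from the homomorphism identity in $S$, by choosing $a$, $b$, $c$ within $S_\xi$ to realize arbitrary $x$, $y$, $w$, and then lift the condition to all $c\in S$ by projecting to $S_{\gamma\delta}$.
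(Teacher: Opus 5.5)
Your reduction (Theorem \ref{main1}, then Lemma \ref{square1}, then Lemmas \ref{shou13}, \ref{shou15} and \ref{zhuyin1}) is the route the paper indicates for its omitted proof. The following parts go through as you describe:
\begin{itemize}
\item the two $\sigma$-identities;
\item your counterexample to $\sigma^z_a\sigma^{z^\ast}_{a^\ast}=\sigma^{z^\ast}_{a^\ast}\sigma^z_a$;
\item the identity $\tau^z_{a^\ast}\tau^z_a\tau^z_{a^\ast}=\tau^z_{a^\ast}$;
\item part (2);
\item part (3), read with $c\in S$.
\end{itemize}
One small slip: the projection governing $\sigma^{z^\ast}$ is $\overline z^{\ast\ast}\,\overline z^{\ast}=\overline z\,\overline z^{\ast}=(u,1,u)$, not $\overline z^{\ast}\overline z^{\ast\ast}$, which equals $(v,1,v)$. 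The value $(u,1,u)$ you then use is correct.

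The genuine error is the commutation $\tau^z_a\tau^z_{a^\ast}=\tau^z_{a^\ast}\tau^z_a$. Your justification, that ``the last index is preserved'', contradicts your own formula $\tau^{\overline z}_{(k,y,l)}(i,x,j)=(v,\hat\tau^t_y(x),l)$. The last index of $\tau^z_b(a)$ comes from the subscript $b$, because $b$ is the rightmost factor in $(\sigma^z_a(b))^\ast ab$; it does not come from the argument. So for $a=(i,x,j)$ and $c=(m,w,n)$ you get $\tau^z_a\tau^z_{a^\ast}(c)=(v,w,j)$ but $\tau^z_{a^\ast}\tau^z_a(c)=(v,w,i)$.

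Equivalently, part (2) gives $\tau^z_a\tau^z_{a^\ast}=\tau^z_{a^\ast a}$ and $\tau^z_{a^\ast}\tau^z_a=\tau^z_{aa^\ast}$. In a regular $\star$-semigroup $aa^\ast\neq a^\ast a$ in general, unlike the Clifford setting of weak left braces, where the corresponding identity does hold.

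Your own example refutes the claim. Take $I=\{1,2\}$, $G$ trivial (so every $z$ is a distributor by Lemma \ref{shou13}) and $a=(1,1,2)$. Then $\tau^z_a$ is the constant map with value $(v,1,2)$ and $\tau^z_{a^\ast}$ is constant with value $(v,1,1)$. The two composites are therefore distinct constant maps.

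So this clause of (1) is false as printed, and no argument can establish it. Only its middle-coordinate shadow $\hat\tau^t_x\hat\tau^t_{x^{-1}}=\hat\tau^t_{x^{-1}}\hat\tau^t_x$ in $G$ survives. What your method legitimately proves is $\tau^z_{a^\ast}\tau^z_a\tau^z_{a^\ast}=\tau^z_{a^\ast}$ together with $\tau^z_a\tau^z_{a^\ast}\tau^z_a=\tau^z_a$. The latter is presumably the intended second, duplicated $\tau$-identity, and in both the outer indices do match.
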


\begin{remark}
In \cite{Catino-Mazzotta-Stefanelli2}, Catino,  Mazzotta and Stefanelli have established the Rota-Baxter operator theory of Clifford semigroups and explored the relationship between this theory and dual weak left braces. On the other hand, Gong and Wang \cite{Gong-Wang} considered the relationship among post Clifford semigroups, dual weak left braces and other algebraic structures.
Thus, the following question is natural: How to establish a theory parallel to the theory in \cite{Catino-Mazzotta-Stefanelli2,Gong-Wang}  for dual weak left $\star$-braces? We shall continue to study this problem in a separate paper.
\end{remark}
\noindent {\bf Acknowledgment:}   The paper is supported partially  by the Nature Science Foundations of  China (12271442, 11661082).

\end{document}